\def\adl@drawiv#1#2#3{%
        \hskip.5\tabcolsep
        \xleaders#3{#2.5\@tempdimb #1{1}#2.5\@tempdimb}%
                #2\z@ plus1fil minus1fil\relax
        \hskip.5\tabcolsep}
\newcommand{\hdashlinelr}{%
  \noalign{\vskip\aboverulesep
           \global\let\@dashdrawstore\adl@draw
           \global\let\adl@draw\adl@drawiv}
  \hdashline[1pt/1pt]
  \noalign{\global\let\adl@draw\@dashdrawstore
           \vskip\belowrulesep}}
\title{Sizes of Pentagonal Clusters in Fullerenes}
\author[1,2]{Nino Ba\v{s}i\'{c} \thanks{\textbf{Acknowledgement:} The work of T. P. and N. B. has been
  supported in part by ARRS Slovenia, Projects P1-0294 and
  N1-0032. In addition the work of T. P. was supported by Projects
  J1-7051 and J1-6720.}}
\author[3]{Gunnar Brinkmann}
\author[4]{Patrick~W.~Fowler}
\author[1,2]{Toma\v{z}~Pisanski~${}^*$}
\author[3]{Nico Van Cleemput}
\affil[1]{FAMNIT, University of Primorska, Koper, Slovenia}
\affil[2]{Institute of Mathematics, Physics and Mechanics, Ljubljana, Slovenia}
\affil[3]{Applied Mathematics, Computer Science and Statistics, Ghent University, Krijgslaan 281 S9, 9000 Gent, Belgium}
\affil[4]{Department of Chemistry, University of Sheffield, Sheffield S3 7HF, UK}
\date{\today}
\begin{document}

\nocite{*} % Include all BibTeX references.

\maketitle

\footnotetext[2]{\textbf{E-mails:} \url{nino.basic@famnit.upr.si} (N. Bašić),
\url{gunnar.brinkmann@ugent.be} (G. Brinkmann),\\
\url{p.w.fowler@sheffield.ac.uk} (P. W. Fowler),
\url{tomaz.pisanski@upr.si} (T. Pisanski),\\
\url{nico.vancleemput@gmail.com} (N. Van Cleemput)
}

\begin{abstract}
Stability and chemistry, both exohedral and endohedral, of fullerenes
are critically dependent on the distribution of their obligatory 12
pentagonal faces.  It is well known that there are infinitely many
IPR-fullerenes and that the pentagons in these fullerenes can be at an
arbitrarily large distance from each other.  IPR-fullerenes can 
be described as fullerenes in which each connected cluster of pentagons
has size $1$. In this paper we study the combinations of cluster
sizes that can occur in fullerenes and whether the clusters can be at an arbitrarily
large distance from each other. For each possible partition of the 
number $12$,  we are able to decide whether the partition describes the
sizes of pentagon clusters in a possible fullerene, and state whether the different clusters
can be at an arbitrarily large distance from each other.
We will prove that all partitions with largest cluster of size $5$ or less can occur in
an infinite number of fullerenes with the clusters at an arbitrarily large distance of each other,
that $9$ partitions occur in only a finite number of fullerene isomers
and that $15$ partitions do not occur at all in fullerenes.

\end{abstract}

\noindent
{\bf Keywords:}
Fullerene,
patch, distance,
pentagonal incidence partition.

\noindent
{\bf Math.\ Subj.\ Class.\ (2010):}
05C10, % Planar graphs; geometric and topological aspects of graph theory
52B10, % Three-dimensional polytopes
92E10  % Molecular structure (graph-theoretic methods, methods of differential topology, etc.)

\section{Introduction}
All classical fullerene isomers are constructed according to the same
basic recipe of twelve faces pentagonal and all others hexagonal, but
their properties vary significantly, depending on the distribution of
the pentagons within the otherwise hexagonal framework.  Relative
total energies of isomers of lower fullerenes at a fixed number of
carbon atoms follow a general trend of decrease with minimisation of
pentagon adjacencies, leading ultimately to the isolated-pentagon rule
(IPR)~\cite{atlas, krotoipr, texas}, according to which we would
expect the most stable isomer to have isolated pentagons.  Exceptions
to these trends are rare: the most stable isomer of C$_{50}$ is
predicted to be $D_3$ 50:270, which has one more than the minimum
mathematically achievable number of pentagon adjacencies for this
vertex count~\cite{ceulemans, zhao2005}; the most stable isomer of
C$_{62}$, a vertex count for which IPR isomers are unavailable, is a
non-classical cage with one heptagonal face, but even this cage has
the minimum number of pentagon adjacencies~\cite{ayuela}.

The
rules for fullerene anions and for fullerenes as parent cages in
endohedral metallic fullerenes (EMFs) are more complicated, involving
both charge and pentagon adjacencies~\cite{Rich,NatureChem}.
Typically, the parent cages of EMFs are not those found for native
bare fullerenes, and may include face sizes other than $5$ and $6$.
The exohedral and endohedral chemistries of these molecules, real or hypothetical, depend on
their detailed structure, but nevertheless the trends are determined
to a great extent by combinatorially defined properties, many of which
depend on the pentagon distribution~\cite{atlas}.  

Early discussion on the relative
stability of isomers of [60]fullerene, C$_{60}$, noted that the sole
experimental isomer was the first IPR fullerene and that the second
experimentally observed species was the next possible
isolated-pentagon fullerene, the unique isomer of C$_{70}$ with 12
disjoint pentagons~\cite{bucky, taylor}. Other studies considered models of
stability based on the numbers of pentagon pairs and fully fused
pentagon triples \cite{trina}.
 
Work on the range of validity of the
face-spiral conjecture~\cite{mano91} found unspirallable
fullerenes with various patches of pentagons in fused triples and
quadruples, or in close associations of pentagons derived from
them~\cite{yoshida3, yoshida1, yoshida2}. With the discovery of
nanotubes~\cite{iijima}, it became routine to consider very large
fullerenes in which two hemispherical portions were separated by an arbitrarily long portion of cylindrical graphene.  

Proposed
mechanisms of rearrangements between fullerenes (the Stone-Wales
transformation \cite{stonewales}) and growth or decay processes
mediated by inclusion/extrusion of C$_2$ (the Endo-Kroto
mechanism\cite{endokroto}) all depend on the presence of specific
local mutual dispositions of pentagons and hexagons in fullerenes and nanotubes~\cite{atlas}.  

For all these reasons, it is important to study the
clustering of pentagons in general fullerenes and to understand the
basic mathematical limitations on their combinations and mutual
separations.  This is the subject of the present paper.

\section{Basic definitions}

\emph{Fullerenes} are closed carbon-cage molecules that contain only pentagonal and
hexagonal rings. Each \ce{C} atom is bonded to exactly three other \ce{C} atoms.
Fullerenes can therefore be modelled as trivalent \emph{convex polyhedra} with
only pentagonal and hexagonal faces. Atoms correspond to vertices of the
polyhedron, bonds to edges of the polyhedron and rings to faces.
An equivalent approach is via graph theory:

\begin{definition}
\label{def:fullerenegraph}
A \emph{fullerene graph} is a plane cubic graph that contains only pentagonal and hexagonal
faces (including the outer face).
\end{definition}

Alternatively, a fullerene graph is the skeleton of a cubic convex
polyhedron with pentagonal and hexagonal faces.  Let the number of
vertices, edges and faces of a fullerene graph be denoted by $n$, $e$
and $f$, respectively. It is easy to show that there are exactly 12
pentagonal faces in each fullerene graph.  There exists at least one fullerene graph
for each even number $n \geq 20$ with the exception of $n = 22$~\cite{Grunbaum}.

A special class of fullerenes is that of the IPR (Isolated Pentagon
Rule) fullerenes. In an IPR fullerene no two pentagons share an
edge. Using various methods from quantum mechanics, Albertazzi et
al.~\cite{Albertazzi1999} provided evidence that pentagon adjacencies
in a fullerene give rise to significant energetic penalties, leading
to a trend to minimsation of pentagon adjacencies, and ultimately to
the isolated-pentagon rule. The implication is that the most stable
isomer of a bare, neutral fullerene will obey the IPR if this is
mathematically possible, and at any rate will be one with the smallest
mathematically possible number of pentagon adjacencies; as noted
earlier, exceptions to this rule of thumb are rare.  Here we
generalise the idea behind IPR fullerenes and introduce the following
definition.

\begin{definition}

Let $F$ be a fullerene.

\begin{itemize}
\item For two pentagons $P,P'$ in $F$ we write $P\sim P'$ if they share an
edge. The equivalence classes of the equivalence relation 
generated by these relations are called the \emph{pentagon clusters} 
or for short \emph{clusters} of $F$.

\item The \emph{distance}
$d(p_1,p_2)$ between two pentagons $p_1$ and $p_2$ in $F$ is the minimum
number $d$ such that there are faces $p_1=f_1, \dots ,f_{d+1}=p_2$, so that
for $1\le i \le d$ the face $f_i$
shares an edge with $f_{i+1}$.

\item The \emph{distance}
$d(C_1,C_2)$ of two pentagon clusters $C_1$ and $C_2$ in $F$ 
is defined as $d(C_1,C_2)=\min \{d(p_1,p_2) \mid p_1\in C_1, p_2\in C_2\}$.

\item If $F$ has at least two pentagon clusters, we define the 
\emph{separation number}
$s(F)$ as \linebreak
$\min\{d(C,C')\}$, where $C, C'$ are different pentagon clusters of $F$.

\item A \emph{PIP} (\emph{Pentagonal Incidence Partition}) 
$(s_1,s_2,\dots ,s_k)$ of the fullerene $F$,
denoted $\mathit{PIP}(F)$, is the sequence of sizes
of the pentagon clusters of $F$ in non-increasing order.
\end{itemize}
\end{definition}

Each \emph{PIP} defines a partition\index{partition} of the
number 12, that is:  a non-increasing sequence $(p_1,p_2,\dots ,p_k)$
which sums up to 12. Whenever we talk about a partition, it will always be a partition of the number 12.

IPR-fullerenes are fullerenes $F$ with $\mathit{PIP}(F)=(1,1,1,1,1,1,1,1,1,1,1,1)$.

The main goal of this paper is the classification of partitions of the number 12 with respect
to their occurrence as \emph{PIP}'s of  fullerenes.
There will be four classes:

\begin{enumerate}
\item[(a)] Partitions that are not pentagonal incidence partitions of fullerenes.
\item[(b)]  Partitions that are pentagonal incidence partitions of a positive finite number of fullerenes.
\item[(c)]  Partitions that are pentagonal incidence partitions of infinitely many fullerenes $F$, but
$s(F)$ is bounded by a constant.
\item[(d)]  Partitions that are pentagonal incidence partitions of fullerenes $F$ with
arbitrarily large $s(F)$.
\end{enumerate}

 This classification is summarised in Table~\ref{tab:summary}. For partitions in class
 (b) the number of fullerenes realizing the partition is given. Fullerenes that have a
pentagonal incidence partition of type (b) can be downloaded from the graph
database HoG (see \cite{HoG}). 
For example, fullerenes with partition $(9,3)$
can be found by searching for 
the keyword \verb+pentagon_cluster_9_3+, and 
analogously for the other partitions of type (b).

\begin{table}
\newcommand{\tick}{\ding{51}}
\scriptsize
\begin{minipage}[t]{0.5\linewidth}
\ 
\begin{center}
\begin{tabular}{b{2cm}cccc}
\toprule
 & \multirow{2}{*}{\rotatebox[origin=c]{-90}{\parbox{2cm}{\centering Impossible (a)}}} & \multirow{2}{*}{\rotatebox[origin=c]{-90}{\parbox{2cm}{\centering Finite number (b)}}} &\multicolumn{2}{c}{Infinite}\\
Partition & & & \rotatebox[origin=c]{-90}{\parbox{1.8cm}{\centering bounded $s(F)$ (c)}} & \rotatebox[origin=c]{-90}{\parbox{1.8cm}{\centering unbounded\\ $s(F)$ (d)}}\\
\midrule
$(12 )$& & 41\\
\hdashlinelr
$(11,1)$ & & 2\\
\hdashlinelr
$(10,2)$ & & 1\\
$(10,1,1)$ & & 1\\
\hdashlinelr
$(9,3)$ & & 2\\
$(9,2,1)$ & \tick \\
$(9,1,1,1)$ & \tick \\
\hdashlinelr
$(8,4)$ & & 16\\
$(8,3,1)$ & \tick \\
$(8,2,2)$ & \tick \\
$(8,2,1,1)$ & \tick \\
$(8,1,1,1,1)$ & \tick \\
\hdashlinelr
$(7,5)$ & & 69\\
$(7,4,1)$ & & 12\\
$(7,3,2)$ & & 1\\
$(7,3,1,1)$ & \tick \\
$(7,2,2,1)$ & \tick \\
$(7,2,1,1,1)$ & \tick \\
$(7,1,1,1,1,1)$ & \tick \\
\bottomrule
\end{tabular}
\end{center}
\end{minipage}
\begin{minipage}[t]{0.5\linewidth}
\ 
\begin{center}
\begin{tabular}{b{2cm}cccc}
\toprule
 & \multirow{2}{*}{\rotatebox[origin=c]{-90}{\parbox{2cm}{\centering Impossible (a)}}} & \multirow{2}{*}{\rotatebox[origin=c]{-90}{\parbox{2cm}{\centering Finite number (b)}}} &\multicolumn{2}{c}{Infinite}\\
Partition & & & \rotatebox[origin=c]{-90}{\parbox{1.8cm}{\centering bounded $s(F)$ (c)}} & \rotatebox[origin=c]{-90}{\parbox{1.8cm}{\centering unbounded\\ $s(F)$ (d)}}\\
\midrule
$(6,6)$ & & & & \tick \\
$(6,5,1)$ & & & \tick \\
$(6,4,2)$ & & & \tick \\
$(6,4,1,1)$ & & & \tick \\
$(6,3,3)$ & & & \tick \\
$(6,3,2,1)$ & & & \tick \\
$(6,3,1,1,1)$ & \tick \\
$(6,2,2,2)$ & \tick \\
$(6,2,2,1,1)$ & \tick \\
$(6,2,1,1,1,1)$ & \tick \\
$(6,1,1,1,1,1,1)$ & \tick \\
\bottomrule
\end{tabular}
\end{center}
\end{minipage}
\caption{A summary of classes of the partitions of the number 12. All partitions $(p_1,p_2,\dots ,p_k)$ with $p_1<6$ are
of type (d).}\label{tab:summary}
\end{table}

\begin{table}
{
\footnotesize
\begin{tabular}{llll}
\toprule
PIP & Group & Count & List of fullerenes \\
\midrule
\((12)\)  & \(C_1\) & 6  & 36:7, 38:7, 38:11, 38:14, 40:34, 42:37\\
 & \(C_2\) & 18  & 32:1, 32:4, 34:1, 34:4, \underline{34:5}, 36:10, 36:11, 36:12, \underline{38:17}, 40:11, 40:23, 40:35,\\
 & & & 40:36, 42:38, 42:43, 44:66, 44:81, 46:113\\
 & \(C_s\) & 1  & 34:3\\
 & \(D_2\) & 3  & 28:1, 36:5, 44:85\\
 & \(C_{2v}\) & 3  & 30:2, \underline{30:3}, 38:12\\
 & \(D_3\) & 1  & \underline{32:6}\\
 & \(C_{3v}\) & 1  & 34:6\\
 & \(D_{2d}\) & 1  & \underline{36:14}\\
 & \(D_{3h}\) & 2  & \underline{26:1}, 32:5\\
 & \(D_{3d}\) & 1  & 44:86\\
 & \(D_{6d}\) & 2  & \underline{24:1}, 48:186\\
 & \(T_d\) & 1  & \underline{28:2}\\
 & \(I_h\) & 1  & \underline{20:1}\\
 & \textbf{all} & \textbf{41} & \\
\hdashlinelr
\((11,1)\)  & \(C_s\) & 2  & 40:28, 42:42\\
\hdashlinelr
\((10,2)\)  & \(C_{2v}\) & 1  & 40:37\\
\hdashlinelr
\((10,1,1)\)  & \(D_{5d}\) & 1  & \underline{40:39}\\
\hdashlinelr
\((9,3)\)  & \(C_s\) & 1  & 44:71\\
 & \(C_{3v}\) & 1  & 38:16\\
 & \textbf{all} & \textbf{2} & \\
\hdashlinelr
\((8,4)\)  & \(C_1\) & 6  & 38:8, 42:15, 42:36, 46:58, 48:60, 48:86\\
 & \(C_2\) & 7  & 40:15, 40:18, 44:76, 48:46, 48:63, 48:170, 52:83\\
 & \(C_s\) & 2  & 46:28, 46:57\\
 & \(C_{2v}\) & 1  & 36:9\\
 & \textbf{all} & \textbf{16} & \\
\hdashlinelr
\((7,5)\)  & \(C_1\) & 52  & 36:3, 38:3, 38:4, 38:5, 40:4, 40:6, 40:12, 40:26, 42:2, 42:4, 42:10, 42:25, 42:29,\\
& & & 42:30, 42:44, 44:9, 44:10, 44:18, 44:41, 44:42, 44:48, 46:6, 46:15, 46:17, 46:45,\\
& & & 46:71, 46:105, 48:10, 48:20, 48:181, 48:182, 50:10, 50:12, 50:139, 50:140, 50:141,\\
& & & 50:142, 50:232, 50:235, 52:9, 52:117, 52:118, 52:183, 52:196, 54:32, 54:33, 54:134,\\
& & & 56:58, 56:295, 58:17, 58:18, 60:30\\
 & \(C_s\) & 17  & 34:2, 36:4, 36:8, 40:7, 40:13, 40:24, 42:12, 44:11, 44:84, 46:8, 48:75, 50:33, 54:19,\\
& & & 54:474, 58:240, 60:90, 64:53\\
 & \textbf{all} & \textbf{69} & \\
\hdashlinelr
\((7,4,1)\)  & \(C_1\) & 8  & 44:51, 46:27, 46:29, 46:30, 46:59, 48:106, 50:50, 52:166\\
 & \(C_s\) & 4  & 44:28, 44:54, 46:41, 54:101\\
 & \textbf{all} & \textbf{12} & \\
\hdashlinelr
\((7,3,2)\)  & \(C_s\) & 1  & 48:141\\
\bottomrule
\end{tabular}
}
\caption{Lists of fullerenes for partitions for which only finitely many fullerenes exist.
The isomers are listed grouped with respect to their symmetry group and given as x:y
with x the number of atoms and y the number in the spiral order (see \cite{atlas}).
Isomers with a minimum number of pentagon adjacencies for their number of atoms
are underlined. 
Except for the programs for generation and computing the symmetry group, the results
were confirmed by two independent programs.}
\label{tab:spiralnumbers}
\end{table}

\section{Partitions $\boldsymbol{(p_1,p_2,\dots ,p_k)}$ with $\boldsymbol{p_1>6}$}

\begin{definition}
\label{def:patch}\ 
\begin{itemize}
\item A \emph{5-6-patch} or, for short, a \emph{patch} 
is a 2-connected plane graph that contains (apart from the outer
face) only pentagonal and hexagonal
faces and where all vertices in the outer face have degree 2 or 3 and all other vertices have degree 3.

\item The \emph{boundary length} $b(P)$ of a patch $P$ is the length of the cycle that is the boundary of the outer face. 

\item The \emph{complement} $C^c$ of a pentagon cluster $C$ 
in a fullerene $F$ is the plane graph consisting of
all vertices and edges that belong to faces of $F$ not in $C$. 
This implies that edges contained in only one pentagon of $C$
and vertices contained in only one or two faces of $C$ are in $C$ and in $C^c$.
\end{itemize}
\end{definition}

The complement of a cluster can be disconnected, and is therefore not necessarily itself
a patch, but each component of the complement is
a patch. A simple consequence of the Euler formula is that in each patch with $p<6$ pentagons
there are more vertices with degree 2 included in the boundary of the outer face than vertices with degree 3. This implies that 
there is an edge in the boundary of the outer face where both endpoints have degree 2.
If we have two patches with $p<6$ pentagons, we can identify the two patches along 
edges where both endpoints have degree 2. The result will be a patch that has the same
number of faces as the two patches together, but with a shorter boundary than
the sum of the boundary lengths. Iterating this process we get the following remark:

\begin{lemma}\label{lem:discon}

Let $\{P_1,P_2,\dots ,P_k\}$ be a set of at least two patches with a total of $p<6$ pentagons.
Then there is a patch $P$ with as many pentagons but more hexagons than $P_1,P_2,\dots ,P_k$
and with $b(P)=\sum_{i=1}^k b(P_i)$.

\end{lemma}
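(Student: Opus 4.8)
The plan is to reduce the disconnected situation to a connected one in two stages: first glue the patches $P_1,\dots,P_k$ together into a single patch, accepting a loss of boundary length, and then pad that patch with hexagons to bring the boundary length back up to $\sum_i b(P_i)$ exactly. The starting observation is the one already recorded before the statement: counting vertices, edges and edge--face incidences and eliminating the number of hexagons and the number of interior vertices, one obtains for any patch with $p$ pentagons the identity $n_2-n_3 = 6-p$, where $n_2$, $n_3$ are the numbers of degree-$2$ and degree-$3$ vertices on the boundary cycle. Hence when $p<6$ we have $n_2>n_3$, so on the boundary cycle the degree-$2$ vertices cannot form an independent set and there is a boundary edge both of whose endpoints have degree $2$. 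I will also use the sharper consequence that when $p<6$ the boundary cycle contains three consecutive vertices of degrees $(2,2,2)$ or three consecutive vertices of degrees $(2,3,2)$: if neither pattern occurred, the degree-$3$ vertices would lie in maximal runs of length $\ge 2$ and the degree-$2$ vertices in maximal runs of length $\le 2$, and with $r$ runs of each type alternating around the cycle this would force $n_3\ge 2r\ge n_2$, contradicting $n_2>n_3$.

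For the merging step: given two patches with fewer than $6$ pentagons each, choose in each a boundary edge with both endpoints of degree $2$ and identify the two edges and their endpoints. I would check the result is again a patch -- it is $2$-connected (after identification the two patches share an edge, and removing any single vertex still leaves the union connected), it is planar with the interior faces of the two patches unchanged, and the two identified vertices acquire degree $3$ and stay on the new outer face, whose boundary is obtained by concatenating the two boundary cycles after deleting the identified edge from each; the total number of faces is additive and $b = b(P)+b(P')-2$. Since at every stage the total number of pentagons is still $p<6$, the operation applies repeatedly, and after $k-1$ identifications we obtain a single patch $Q$ with $p$ pentagons, $\sum_i h(P_i)$ hexagons, and $b(Q)=\sum_i b(P_i)-2(k-1)$.

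It remains to raise the boundary length of $Q$ by $2(k-1)$ while only adding hexagons. Two padding moves are available. Gluing a hexagon onto a boundary edge whose endpoints both have degree $2$ -- such an edge exists as long as the current patch has $p<6$ pentagons -- keeps the pentagon count, adds one hexagon, and increases the boundary length by $4$. Gluing a hexagon into a $(2,3,2)$ corner of the boundary (two consecutive boundary edges whose common vertex has degree $3$ and whose outer endpoints have degree $2$) adds one hexagon and increases the boundary length by $2$. Moreover a $(2,2,2)$ pattern turns into a $(2,3,2)$ pattern after a single ``$+4$'' move, so by the dichotomy above such a ``$+2$'' move is always ultimately available. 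Performing a suitable sequence of these moves brings the boundary length to exactly $\sum_i b(P_i)$; at least $k-1\ge1$ hexagons have been added, so the resulting patch $P$ has $p$ pentagons, strictly more than $\sum_i h(P_i)$ hexagons, and $b(P)=\sum_i b(P_i)$, as claimed.

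The step I expect to need the most care is this last one. Because each gluing in the merging step shortens the boundary by $2$ while the cheapest padding move lengthens it by $4$, when $k$ is even the target length cannot be reached by ``$+4$'' moves alone and one is forced to use at least one ``$+2$'' move; so the argument must guarantee that a $(2,3,2)$ configuration (or, after one ``$+4$'' move, a $(2,2,2)$ configuration) is genuinely present on the boundary at the right moment. The dichotomy stated in the first paragraph, together with a short analysis of how the boundary degree sequence changes under the gluing and padding moves -- in particular keeping track of where the newly created degree-$3$ vertices sit -- is what makes this work; the rest is routine verification of the patch axioms.
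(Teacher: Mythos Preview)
Your overall plan---merge the patches along $(2,2)$ edges, then pad with hexagons---is the same as the paper's. The merging step and the Euler-formula identity $n_2-n_3=6-p$ are used identically. The difference is in how the padding is carried out, and here your argument has a genuine gap.

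The paper merges \emph{one patch at a time} and after each merge needs to raise the boundary by exactly~$2$. It does this by repeatedly adding a hexagon at the \emph{shortest} maximal run of $3$'s, which changes the boundary by $+2$, $0$, or $-2$; since the number of hexagons in a patch with $p<6$ pentagons and bounded boundary is bounded (this is the cited result from \cite{BorBriGre2003}), the process cannot stay below the target forever, and because the boundary moves in steps of $2$ it must hit the target exactly. Your scheme instead does all $k-1$ merges first and then tries to recover $2(k-1)$ in boundary length using only monotone $+4$ and $+2$ moves. The difficulty you flag in your last paragraph is real and is not resolved by the dichotomy alone: when $k=2$ you must increase the boundary by exactly~$2$, and if the merged patch $Q$ happens to have no $(2,3,2)$ pattern, no $+2$ move is available and any $+4$ move overshoots irrecoverably (you have introduced no move that decreases the boundary). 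Your remark that a $+4$ move on a $(2,2,2)$ produces a $(2,3,2)$ is correct, but it does not help here because the $+4$ already takes you past the target. A genuine fix requires either showing that one can always \emph{choose} the glue edges so that $Q$ contains a $(2,3,2)$---which is plausible but needs a careful case analysis you have not supplied---or allowing non-monotone moves and invoking a termination argument as the paper does.

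Two minor points. First, your count ``at least $k-1$ hexagons have been added'' is backwards: with $a$ moves of type $+4$ and $b$ of type $+2$ one has $2a+b=k-1$, hence $a+b=k-1-a\le k-1$; what you need (and what holds) is simply $a+b\ge 1$. Second, the claim that the merged object is $2$-connected deserves a line of justification, since identifying two patches along a single edge can in principle create a cut vertex; here it does not, because each endpoint of the glued edge has a neighbour in each patch.
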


\begin{proof}
We will prove that if for $1\le j <k$ there is a patch $P^j$ with as many pentagons and 
at least as many hexagons as $P_1,P_2,\dots ,P_j$ and $b(P^j)=\sum_{i=1}^j b(P_i)$, then there is 
a patch $P^{j+1}$ with as many pentagons and 
more hexagons than $P_1,P_2,\dots ,P_{j+1}$ and 
with $b(P^{j+1})=\sum_{i=1}^{j+1} b(P_i)$. 

For $j=1$ we can choose $P^1=P_1$, so assume $P^j$ is given for $1\le j <k$. By identifying
$P^j$ and $P_{j+1}$ along edges where both endpoints have degree 2, we get a patch
$\bar P^{j+1}$ with $b(\bar P^{j+1})=(\sum_{i=1}^{j+1} b(P_i))-2$ and as many pentagons and 
at least as many hexagons as $P_1,P_2,\dots ,P_{j+1}$. In the cyclic sequence of degrees
in the boundary of a patch there are as many maximal sequences of 2's as there
are maximal sequences of 3's and in a patch with less than six pentagons 
there are more 2's than 3's in total. The maximum
length of a sequence of 2's is four, so the average length of a maximal sequence of 3's 
in such a patch is less than four, and so there is a sequence of length 1, 2 or 3. 
If one adds a hexagon at a place with a maximal sequence of 3's of length 
$i\in \{1,2,3\}$ then the boundary length grows by $4-2i$. So the boundary stays the same,
shrinks by 2 or grows by 2. As the number of hexagons in a patch with $p<6$ pentagons
and boundary length $b(\bar P^{j+1})$ is bounded (see \cite{BorBriGre2003}), successively 
adding hexagons at a shortest maximal sequence of 3's must finally 
grow from $(\sum_{i=1}^{j+1} b(P_i))-2$ to $\sum_{i=1}^{j+1} b(P_i)$.
As in this process at least one hexagon was added, we have constructed a patch with boundary length
$\sum_{i=1}^{j+1} b(P_i)$ and more hexagons than $P_1,P_2,\dots ,P_{j+1}$.
\end{proof}

For a partition $(p_1,p_2,\dots ,p_k)$ with $p_1>6$, we will use Theorem~\ref{thm:hexnumbers}
from \cite{BorBriGre2003} 
and Lemma~\ref{lem:discon} to determine an upper bound
on the number of hexagons in the complement of the cluster with size $p_1$, and therefore in the fullerene.
In \cite{BorBriGre2003} it is proven that among all patches with $p\le 6$ pentagons and $h\ge 0$
hexagons, a patch constructed in a spiral fashion starting with the pentagons has the shortest
boundary. This gives the following lower bounds on the boundary length: 

\begin{theorem}[\cite{BorBriGre2003}]\label{thm:hexnumbers}
 Let $P$ be a patch with $p \leq 5$ pentagons and $h$ hexagons. Then
\begin{equation*}
b(P) \geq \begin{cases}
2 \left\lceil \sqrt{12h - 3}\, \right\rceil & \text{if }p = 0,\\
2 \left\lceil \sqrt{10h + \frac{25}{4}} + \frac{1}{2} \right\rceil - 1 & \text{if }p = 1,\\
2 \left\lceil \sqrt{8h + 6} \right\rceil & \text{if }p = 2,\\
2 \left\lceil \sqrt{6h + \frac{81}{4}} + \frac{1}{2} \right\rceil - 1 & \text{if }p = 3,\\
2 \left\lceil \sqrt{4h + 25} \right\rceil & \text{if }p = 4,\\
2 \left\lceil \sqrt{2h + \frac{113}{4}} + \frac{1}{2} \right\rceil - 1 & \text{if }p = 5.
\end{cases}
\end{equation*}
\end{theorem}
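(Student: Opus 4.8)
The plan is to reduce the statement to a sharp lower bound on the number of degree-$2$ vertices of a patch, to identify the extremal patch explicitly, and then to prove optimality by an extremal argument combined with ``layer peeling''. For the reduction, let $n_2$ and $n_3$ be the numbers of degree-$2$ and degree-$3$ vertices of a patch $P$ with $p$ pentagons and $h$ hexagons; every degree-$2$ vertex lies on the boundary. Euler's formula (with $V=n_2+n_3$, $2E=2n_2+3n_3$, $F=p+h+1$) gives $n_3=2(p+h-1)$, and summing face sizes — each edge lying in two faces, the outer face contributing $b(P)$ — gives $5p+6h+b(P)=2n_2+6(p+h-1)$, hence
\[
 b(P)=2n_2+p-6 .
\]
So for fixed $p$ and $h$, minimising $b(P)$ is the same as minimising the number of degree-$2$ boundary vertices, and Theorem~\ref{thm:hexnumbers} is equivalent to a sharp lower bound on that number. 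I would first verify that equality holds for the \emph{spiral patch} $S_{p,h}$ — the $p$ pentagons laid out in a spiral, continued by $h$ hexagons — by a short induction on the number of faces, appending one face at a time and tracking how $n_2$ changes. The $\lceil\sqrt{\,\cdot\,}\,\rceil$ form of the bound is explained by the fact that the spiral fills successive ``rings'' around its centre, the number of faces in the first $k$ rings being a quadratic function of $k$ while the boundary length grows linearly in $k$.

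The real work is the matching lower bound: no patch with $p\le 5$ pentagons and $h$ hexagons has fewer degree-$2$ vertices than $S_{p,h}$. I would take a counterexample $P$ with $h$ minimal and, among those, $b(P)$ minimal, and first extract structural information about its boundary using the combinatorial facts already exploited in the proof of Lemma~\ref{lem:discon}: in the cyclic degree sequence of the boundary the numbers of maximal runs of $2$'s and of $3$'s agree, a run of $2$'s has length at most $4$, and because $p<6$ there are strictly more $2$'s than $3$'s, so short runs of $3$'s must appear. Then I would remove the outermost layer of $P$ (the faces incident with the boundary), obtaining a smaller patch $P'$ — or, in general, a family of smaller patches in the sense of Lemma~\ref{lem:discon} — with $p'\le p$ pentagons and some number $h'$ of hexagons, relate $b(P')$, $p'$, $h'$ to $b(P)$, $p$, $h$ via the counts above and the number of faces removed, and apply minimality to $P'$ to reach a contradiction.

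The main obstacle is precisely this layer-peeling estimate: bounding $b(P')$ from above in terms of $b(P)$ when the outer layer is irregular — narrow necks, fjords, or the at most five pentagons scattered unevenly around the boundary and possibly lying in different layers — and doing this tightly enough to recover the \emph{exact} ceiling expressions rather than merely an estimate of the form $b(P)\ge c\sqrt{h}$ for some constant $c$. This is what forces a separate and somewhat delicate treatment of each of the six values $p=0,1,\dots ,5$, since the optimal spiral, and hence the shape of the extremal outer layer, differs in each case; this is the case analysis carried out in \cite{BorBriGre2003}, whose structure I would follow.
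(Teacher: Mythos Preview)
The paper does not give its own proof of this theorem: it is quoted from \cite{BorBriGre2003}, accompanied only by the one-sentence summary preceding the statement that ``among all patches with $p\le 6$ pentagons and $h\ge 0$ hexagons, a patch constructed in a spiral fashion starting with the pentagons has the shortest boundary''. So there is nothing in the present paper to compare your argument against beyond that sentence, and your sketch is consistent with it: your Euler-formula reduction $b(P)=2n_2+p-6$ is correct, and your identification of the spiral patch $S_{p,h}$ as the extremiser matches the cited description exactly. Your final sentence already concedes that the delicate part is the case analysis of \cite{BorBriGre2003} itself.

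One caution about the outline: the ``remove the outermost layer and apply minimality'' step is more fragile than your write-up suggests. Peeling the boundary faces of a $2$-connected patch need not yield a patch --- the interior can disconnect or lose $2$-connectivity --- and the quantitative relation between $b(P')$ and $b(P)$ after peeling depends on the full degree pattern along the boundary, not just on run-length averages. The actual argument in \cite{BorBriGre2003} proceeds rather by analysing how adding a single face changes the boundary and by comparing an arbitrary patch to the spiral face-by-face, which sidesteps the connectivity issues of wholesale layer removal. If you intend to reconstruct the proof rather than cite it, that incremental viewpoint is the safer route.
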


\begin{corollary}\label{thm:ub_hexagons}
Let \(F\) be a fullerene containing a pentagon cluster with $k \geq 7$ pentagons. Let \(h(F)\) be the number of hexagons in \(F\). Then
\begin{equation*}
h(F) \leq \begin{cases}
52 & \text{if }k = 7,\\
36 & \text{if }k = 8,\\
31 & \text{if }k = 9,\\
30 & \text{if }k = 10, 11, \text{or } 12.
\end{cases}
\end{equation*}
\end{corollary}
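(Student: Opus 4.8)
The plan is to estimate, from two sides, the common boundary length of the cluster with $k\ge 7$ pentagons and of its complement. Fix such a cluster $C$ and set $p:=12-k\in\{0,1,2,3,4,5\}$, the number of pentagons of $F$ not in $C$. Regard $C$ as the plane graph formed by the vertices and edges of its $k$ pentagons; it is $2$-connected (were it not, one could argue blockwise and only improve the bound below), so on the sphere its complement falls into $t\ge 1$ regions, one per component of $C^c$, and the boundary of $C$ is a union of $t$ vertex-disjoint cycles of total length $\ell$. Since every edge of $C$ lies on at least one of its pentagons --- interior edges on two, the $\ell$ boundary edges on one --- we get $2|E(C)|=5k+\ell$ and $|V(C)|=\ell+n_i$, where $n_i$ is the number of interior vertices of $C$; Euler's formula $|V(C)|-|E(C)|+(k+t)=2$ then gives
\[
\ell \;=\; 3k+4-2t-2n_i \;\le\; 3k+2 .
\]

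For the lower estimate, note that $C^c$ contains the remaining $p\le 5$ pentagons together with all $h(F)$ hexagons of $F$, and that each of its components is a patch. If $C^c$ is connected it is a patch with $p$ pentagons, $h(F)$ hexagons and boundary length $\ell$; if it is disconnected, Lemma~\ref{lem:discon} (applicable since $p<6$) replaces its components by a single patch with $p$ pentagons, strictly more than $h(F)$ hexagons, and boundary length equal to the sum of the component boundary lengths, i.e.\ again $\ell$. Either way we obtain a patch with $p\le 5$ pentagons, at least $h(F)$ hexagons and boundary $\ell$, so Theorem~\ref{thm:hexnumbers}, whose right-hand sides are non-decreasing in the number of hexagons, gives $\ell\ge\beta_p\bigl(h(F)\bigr)$, where $\beta_p$ denotes the lower bound of that theorem for $p$ pentagons.

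Combining the two estimates, $\beta_{12-k}\bigl(h(F)\bigr)\le 3k+2$. For $k\in\{7,8,9,11,12\}$ one substitutes $p=12-k$, drops the ceiling on the left (which only weakens the inequality), and solves the resulting elementary inequality in $h(F)$; the largest admissible values come out as $52,36,31,30,30$, exactly the claimed bounds. For $k=10$ this computation gives only $h(F)\le 31$, and the one step that is not pure arithmetic is to rule out $h(F)=31$. In that case both inequalities above must be tight: a disconnected $C^c$ is immediately impossible (it would force $\ell\ge\beta_2(32)=34>32$), and $\ell=3\cdot 10+2=32$ forces $C$ to be a tree of $10$ pentagons with no interior vertex, while $C^c$ must be a single patch with $2$ pentagons and $31$ hexagons whose boundary, of length $32$, attains the Theorem~\ref{thm:hexnumbers} bound for $(p,h)=(2,31)$. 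The plan is to exclude this by showing that this lower bound is in fact never attained for two pentagons and $31$ hexagons, i.e.\ that every such patch has boundary length at least $34$; this should follow from a local analysis of minimum-boundary patches in the style of \cite{BorBriGre2003}, using that a hexagon can be added to a patch without lengthening its boundary only along a run of at least two consecutive degree-$3$ vertices on the boundary. Establishing this exact extremal value in the single borderline case $k=10$ is where the real work lies; everything else is the Euler count, Lemma~\ref{lem:discon}, and the square-root estimates of Theorem~\ref{thm:hexnumbers}.
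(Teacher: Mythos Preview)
Your overall approach is the same as the paper's: bound the total boundary of the complement of the $k$-cluster by $3k+2$, use Lemma~\ref{lem:discon} to reduce a possibly disconnected complement to a single patch, and then invert the inequalities of Theorem~\ref{thm:hexnumbers}. The paper obtains $\ell\le 3k+2$ from the observation that a cluster of $k$ pentagons has at least $k-1$ internal edges, while you derive the sharper $\ell=3k+4-2t-2n_i$ via Euler's formula; the conclusion is the same.

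Where you go beyond the paper is in noticing that for $k=10$ the arithmetic only yields $h(F)\le 31$: with $p=2$ and $\ell\le 32$ one gets $8h+6\le 256$, hence $h\le 31$. The paper's proof does not single out this case; it simply asserts that ``solving the equations'' gives the stated bounds. So the gap you detect is real, and the paper does not close it either. For the paper's purposes this is harmless, since the subsequent computer search is driven by the $k=7$ bound and runs over all fullerenes up to $124$ vertices anyway; whether the $k=10$ bound is $30$ or $31$ is immaterial there.

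Your proposed fix, however, is risky. You want to show that no patch with $2$ pentagons and $31$ hexagons has boundary $32$, i.e.\ that the lower bound $2\lceil\sqrt{8\cdot 31+6}\,\rceil=32$ of Theorem~\ref{thm:hexnumbers} is not attained. But the paper states explicitly that the bounds of that theorem are realised by spiral patches (``a patch constructed in a spiral fashion starting with the pentagons has the shortest boundary. This gives the following lower bounds''). If the spiral patch with two pentagons and $31$ hexagons indeed has boundary $32$, your extremal argument cannot succeed, and $h(F)\le 31$ is the best this method gives for $k=10$. Before investing in the ``local analysis in the style of \cite{BorBriGre2003}'', you should first compute the boundary of that particular spiral patch; if it is $32$, the correct conclusion is that the stated bound $30$ for $k=10$ is a slip (inconsequential for the rest of the paper), not that a sharper extremal lemma is available.
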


\begin{proof}
A pentagon cluster with $k$ pentagons has at least $k-1$ edges that have a pentagon of
the cluster on each side. This implies that the sum of the boundary lengths of all
patches in the complement is at most $5k-2(k-1)=3k+2$.

Solving the equations in Theorem~\ref{thm:hexnumbers} for the number of hexagons and 
applying Lemma~\ref{lem:discon} to show that these numbers are also bounds for
more than one patch in the complement, we get the given bounds for the numbers
of hexagons in a fullerene with a pentagon cluster with $7\le k \le 12$ pentagons.
\end{proof}

The largest number of hexagons can appear for the case with a pentagon
cluster of size 7, so we find that fullerenes containing a pentagon
cluster of size 7 or more have at most 124 vertices.  In order to
identify all partitions $(p_1,p_2,\dots ,p_k)$ with $p_1>6$
that are pentagonal incidence partitions of a fullerene,
we have only to check all fullerenes with up to 124
vertices.  We did this by computer.  We searched the output of the
program {\em fullgen} (see \cite{BrDr95_2}) by two independent programs
for fullerenes with the clusters of size at least 7.  The results of
these searches are summarised in Table~\ref{tab:summary}.

\section{Partitions $\boldsymbol{(p_1,p_2,\dots ,p_k)}$ with $\boldsymbol{p_1=6}$}

There are 18 possible clusters containing 6 pentagons. The 17
clusters that are patches were  
generated by the program {\em ngons} described in \cite{BriGoe2015} that is part of the package
{\em CaGe} described in \cite{CaGe2010}. 
The unique cluster with two boundary components was added by hand. For simplicity we will add
the central hexagon to that cluster and talk about 18 closed clusters that are all patches.
If $C$ is one of these 18 clusters, then $C$ occurs in a fullerene $F$ if and only if its closure
occurs in $F$.
The clusters are shown in Figure~\ref{fig:p6},
together with their boundaries embedded in the hexagonal lattice.

If one of these closed clusters occurs in a fullerene, the complement is a patch. The fact that the
boundary of these clusters forms a path between two parallel lines in the hexagonal lattice
that neither intersects itself nor the parallel lines (except at the endpoints) implies
that the cluster and the complement containing one or more clusters are contained
in one-sided infinite nanotubes (with, of course, the same nanotube parameters).
The nanotube parameters are also displayed in Figure~\ref{fig:p6}. For the definition
of tube parameters see \cite{BNP99} or \cite{Dresselhaus96}.

These 18 clusters correspond to twelve different tube parameters:
$$
(5,0), (3,3), (4,2), (5,1), (6,0),  (4,3), (5,2),  (6,1), (7,0),  (4,4), (5,3),\text{ and }(6,2).
$$
We will refer to this set of parameters as
\(T_6\).

\begin{theorem}
Let $6\geq p_2 \geq p_3 \geq \dots \geq p_k > 0$, where \( k\geq2 \), be natural numbers with % \linebreak 
\(\sum_{i=2}^{k}p_i=6\).
The following three statements are equivalent:
\begin{enumerate}
\item[(a)] There exists a tube cap with parameters in \(T_6\) and with pentagon clusters of sizes $p_2 \geq p_3 \geq \dots \geq p_k$.
\item[(b)]  There exists a fullerene with pentagonal incidence partition $(6,p_2,\dots,p_k)$. 
\item[(c)]  There exists an infinite number of fullerenes with pentagonal incidence partition \linebreak 
$(6,p_2,\dots,p_k)$. 
\end{enumerate}
Moreover, if a fullerene with pentagonal incidence partition $(6,p_2,\dots,p_k)$
exists, the partition is of type (d) if the partition is $(6,6)$ and otherwise of type (c).
\end{theorem}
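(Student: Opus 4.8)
The plan is to prove the cycle of implications (c) $\Rightarrow$ (b) $\Rightarrow$ (a) $\Rightarrow$ (c), and then separately settle the type of the partition. The implication (c) $\Rightarrow$ (b) is trivial. For (b) $\Rightarrow$ (a), suppose a fullerene $F$ has $\mathit{PIP}(F)=(6,p_2,\dots,p_k)$. Pick the cluster $C$ of size $6$ and take its closure $\bar C$ (adding the central hexagon in the exceptional case), which is one of the $18$ patches of Figure~\ref{fig:p6}. By the cited structural fact, the boundary of $\bar C$ is a non-self-intersecting path running between two parallel lines in the hexagonal lattice, so $\bar C$ has a tube parameter in $T_6$; the complement $\bar C^{\,c}$ is a single patch with the same boundary, hence sits inside the one-sided infinite nanotube with the same parameters, and it carries exactly the pentagon clusters of sizes $p_2\ge\dots\ge p_k$. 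Thus $\bar C^{\,c}$ is a tube cap with parameters in $T_6$ realising those cluster sizes, which is statement (a).

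For (a) $\Rightarrow$ (c): given a tube cap $A$ with parameters $t\in T_6$ and clusters of sizes $p_2,\dots,p_k$, choose the $6$-pentagon patch $\bar C$ from Figure~\ref{fig:p6} with the same tube parameter $t$. For every $m\ge 0$ insert $m$ rings of hexagons of the nanotube with parameter $t$ between $\bar C$ and $A$; each such ring adds hexagons only and keeps all faces pentagonal or hexagonal, so the result $F_m$ is a fullerene. Its pentagons are exactly those of $\bar C$ (forming one cluster of size $6$) and those of $A$ (forming clusters of sizes $p_2,\dots,p_k$), and for $m$ large the inserted cylinder separates the two parts so that no new adjacencies are created between them; hence $\mathit{PIP}(F_m)=(6,p_2,\dots,p_k)$ for all sufficiently large $m$, giving infinitely many fullerenes and (c). One must check that the clusters in $\bar C$ and in $A$ are not accidentally merged or split when glued along the nanotube, but since the gluing is along cycles of degree-$2$ vertices introduced by the hexagon rings, no pentagon of one side becomes edge-adjacent to a pentagon of the other for $m\ge 1$, so the cluster structure is the disjoint union of the two.

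It remains to pin down the type. For the partition $(6,6)$, symmetry lets us take both $\bar C$ and $A$ to be the same $6$-pentagon patch, with the same tube parameter, and the construction above with $m\to\infty$ produces fullerenes with $s(F_m)\to\infty$, so $(6,6)$ is of type (d). For every other admissible partition $(6,p_2,\dots,p_k)$ the second part (the complement of the size-$6$ cluster) contains a cluster of size $p_2<6$, or more than one cluster, and here the main obstacle is to show $s(F)$ is bounded: the point is that a patch with fewer than $6$ pentagons is not a legitimate tube cap by itself, so the ``small'' pentagons cannot be pushed off to infinity along the tube; concretely, by Theorem~\ref{thm:hexnumbers} (or the boundary-length count underlying Lemma~\ref{lem:discon}) a patch with $p<6$ pentagons and boundary length equal to that of a $T_6$-tube has a bounded number of hexagons, so the second cluster stays within bounded distance of the seam, and thus within bounded distance of the first cluster. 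This caps $s(F)$ uniformly over all such $F$, placing every partition $(6,p_2,\dots,p_k)\neq(6,6)$ in type (c). The delicate step throughout is the verification that ``same boundary in the hexagonal lattice'' really forces containment in a common nanotube and that the two sides glue without disturbing the cluster partition; everything else is Euler-formula bookkeeping already packaged in the lemmas above.
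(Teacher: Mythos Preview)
Your cycle (c)$\Rightarrow$(b)$\Rightarrow$(a)$\Rightarrow$(c) and the treatment of the $(6,6)$ case match the paper's argument. The gap is in your last paragraph, where you argue that the remaining partitions are of type~(c).

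The claim that ``the second cluster stays within bounded distance of the seam, and thus within bounded distance of the first cluster'' is false, and your own construction refutes it: in the family $F_m$ you build for (a)$\Rightarrow$(c), every cluster $C_i$ with $i\ge 2$ lies in the fixed cap $A$, and as $m\to\infty$ each of them moves to arbitrary distance from the boundary of $\bar C$ and hence from $C_1$. So $d(C_1,C_i)$ is unbounded for every $i\ge 2$. The appeal to Theorem~\ref{thm:hexnumbers} does not help, because the only patch singled out by your decomposition is the complement $\bar C^{\,c}$, and that patch has \emph{exactly} six pentagons --- precisely the case in which the theorem gives no hexagon bound. There is no naturally occurring sub-patch with $p<6$ pentagons and boundary equal to a $T_6$-tube cut to which you could apply it.

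What actually bounds $s(F)$ when $k\ge 3$ is not the distance to the $6$-cluster but the mutual distance between two of the small clusters. The paper invokes~\cite{BNP99}: for each tube parameter in $T_6$ there are only finitely many caps, so the complement cap has a bounded number of faces (hexagon rings at its boundary can be stripped off without disturbing any pentagon), and therefore any two of $C_2,\dots,C_k$ sitting inside it are at uniformly bounded distance from one another. That pair --- not $(C_1,C_2)$ --- is the witness for bounded $s(F)$.
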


\begin{proof}
If fullerenes with the given partition exist, the boundaries embedded in the hexagonal
lattice (see Figure~\ref{fig:p6}) show that the closed clusters as well as their complements
can be extended to one-sided infinite nanotubes. The caps of these nanotubes will then contain
the closed cluster or the other clusters. 
If, on the other hand, caps with parameters in \(T_6\) 
contain pentagon clusters of sizes $p_2 \geq p_3 \geq \dots \geq p_k$,
they can be combined with a 6-cluster with the same parameters and an arbitrary amount of
rings of hexagons separating them. If there are only two clusters, this implies that $s(F)$ is unbounded. If there are three
or more clusters, at least two of them are contained in a cap for which  (see \cite{BNP99})
the number of faces, and therefore also the distance between the clusters, is bounded.
\end{proof}

\begin{figure}[!htbp]
\centering
\subfigure[$(5, 0)$]{
\begin{tikzpicture}[scale=0.015,thick]
    \definecolor{marked}{rgb}{0.25,0.5,0.25}
    \coordinate (15) at (2.474515,19.054680);
    \coordinate (14) at (61.288230,0.000000);
    \coordinate (13) at (2.437442,80.889714);
    \coordinate (12) at (97.562557,50.027804);
    \coordinate (11) at (60.120481,60.982391);
    \coordinate (10) at (26.274328,49.990732);
    \coordinate (9) at (39.212233,32.196479);
    \coordinate (8) at (33.262280,13.790548);
    \coordinate (7) at (6.941613,49.972197);
    \coordinate (6) at (39.193698,67.784986);
    \coordinate (5) at (60.139017,38.980538);
    \coordinate (4) at (75.801667,27.618165);
    \coordinate (3) at (33.225209,86.190917);
    \coordinate (2) at (61.232622,99.999999);
    \coordinate (1) at (75.764596,72.363300);
    \draw [black] (15) to (7);
    \draw [black] (15) to (8);
    \draw [black] (14) to (4);
    \draw [black] (14) to (8);
    \draw [black] (13) to (3);
    \draw [black] (13) to (7);
    \draw [black] (12) to (1);
    \draw [black] (12) to (4);
    \draw [black] (11) to (1);
    \draw [black] (11) to (5);
    \draw [black] (11) to (6);
    \draw [black] (10) to (6);
    \draw [black] (10) to (9);
    \draw [black] (10) to (7);
    \draw [black] (9) to (5);
    \draw [black] (9) to (8);
    \draw [black] (6) to (3);
    \draw [black] (5) to (4);
    \draw [black] (3) to (2);
    \draw [black] (2) to (1);
\end{tikzpicture}
\includegraphics[scale=0.4]{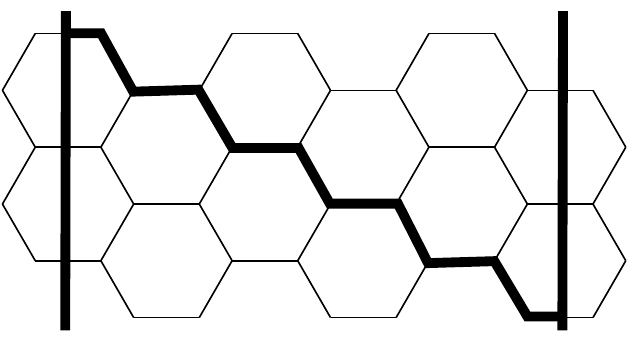}
}
\quad
\subfigure[$(3, 3)$]{
\begin{tikzpicture}[scale=0.015,thick]
    \definecolor{marked}{rgb}{0.25,0.5,0.25}
    \coordinate (16) at (16.108182,6.701821);
    \coordinate (15) at (0.000000,34.648749);
    \coordinate (14) at (83.908827,6.650792);
    \coordinate (13) at (99.999999,34.631739);
    \coordinate (12) at (33.832284,93.349208);
    \coordinate (11) at (50.008505,64.755910);
    \coordinate (10) at (37.557408,12.944377);
    \coordinate (9) at (32.862732,34.937914);
    \coordinate (8) at (62.646708,12.944377);
    \coordinate (7) at (67.239326,34.988942);
    \coordinate (6) at (16.108182,50.008504);
    \coordinate (5) at (50.008505,44.871576);
    \coordinate (4) at (83.959857,50.008504);
    \coordinate (3) at (28.627317,71.729884);
    \coordinate (2) at (66.116686,93.332199);
    \coordinate (1) at (71.389693,71.712876);
    \draw [black] (16) to (15);
    \draw [black] (16) to (10);
    \draw [black] (15) to (6);
    \draw [black] (14) to (13);
    \draw [black] (14) to (8);
    \draw [black] (13) to (4);
    \draw [black] (12) to (2);
    \draw [black] (12) to (3);
    \draw [black] (11) to (1);
    \draw [black] (11) to (5);
    \draw [black] (11) to (3);
    \draw [black] (10) to (9);
    \draw [black] (10) to (8);
    \draw [black] (9) to (5);
    \draw [black] (9) to (6);
    \draw [black] (8) to (7);
    \draw [black] (7) to (4);
    \draw [black] (7) to (5);
    \draw [black] (6) to (3);
    \draw [black] (4) to (1);
    \draw [black] (2) to (1);
\end{tikzpicture}
\includegraphics[scale=0.4]{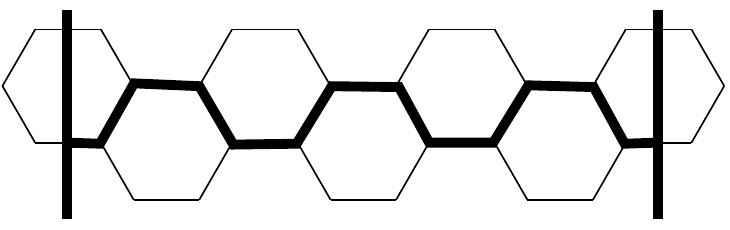}
}
\quad
\subfigure[$(4, 2)$]{
\begin{tikzpicture}[scale=0.015,thick]
    \definecolor{marked}{rgb}{0.25,0.5,0.25}
    \coordinate (16) at (41.825886,0.000000);
    \coordinate (15) at (70.916796,5.885978);
    \coordinate (14) at (85.138675,56.302003);
    \coordinate (13) at (14.861325,43.713405);
    \coordinate (12) at (29.021572,94.114021);
    \coordinate (11) at (40.593220,69.368259);
    \coordinate (10) at (59.391372,30.616332);
    \coordinate (9) at (40.392912,17.812017);
    \coordinate (8) at (77.388291,30.169492);
    \coordinate (7) at (58.174115,49.568567);
    \coordinate (6) at (29.052388,32.788905);
    \coordinate (5) at (41.825886,50.416024);
    \coordinate (4) at (70.978429,67.211094);
    \coordinate (3) at (22.611711,69.845917);
    \coordinate (2) at (58.127889,99.999999);
    \coordinate (1) at (59.591680,82.203389);
    \draw [black] (16) to (15);
    \draw [black] (16) to (9);
    \draw [black] (15) to (8);
    \draw [black] (14) to (4);
    \draw [black] (14) to (8);
    \draw [black] (13) to (3);
    \draw [black] (13) to (6);
    \draw [black] (12) to (2);
    \draw [black] (12) to (3);
    \draw [black] (11) to (1);
    \draw [black] (11) to (5);
    \draw [black] (11) to (3);
    \draw [black] (10) to (7);
    \draw [black] (10) to (8);
    \draw [black] (10) to (9);
    \draw [black] (9) to (6);
    \draw [black] (7) to (4);
    \draw [black] (7) to (5);
    \draw [black] (6) to (5);
    \draw [black] (4) to (1);
    \draw [black] (2) to (1);
\end{tikzpicture}
\includegraphics[scale=0.4]{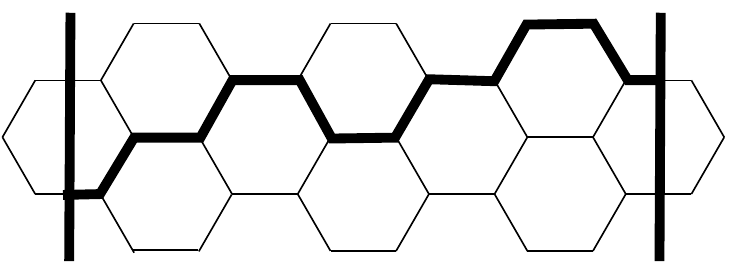}
}
\quad
\subfigure[$(5, 1)$]{
\begin{tikzpicture}[scale=0.015,thick]
    \definecolor{marked}{rgb}{0.25,0.5,0.25}
    \coordinate (17) at (38.271268,0.000000);
    \coordinate (16) at (20.902418,24.150584);
    \coordinate (15) at (74.653439,17.504755);
    \coordinate (14) at (79.097582,56.170155);
    \coordinate (13) at (22.383798,77.194889);
    \coordinate (12) at (38.733350,99.999999);
    \coordinate (11) at (32.223430,67.980429);
    \coordinate (10) at (46.602337,25.754280);
    \coordinate (9) at (55.232399,9.431911);
    \coordinate (8) at (58.018483,37.496602);
    \coordinate (7) at (29.287849,35.539548);
    \coordinate (6) at (74.639848,36.626801);
    \coordinate (5) at (51.345474,52.025007);
    \coordinate (4) at (32.495243,51.997825);
    \coordinate (3) at (64.351726,64.813809);
    \coordinate (2) at (61.185104,90.717588);
    \coordinate (1) at (53.954879,76.678446);
    \draw [black] (17) to (16);
    \draw [black] (17) to (9);
    \draw [black] (16) to (7);
    \draw [black] (15) to (6);
    \draw [black] (15) to (9);
    \draw [black] (14) to (3);
    \draw [black] (14) to (6);
    \draw [black] (13) to (12);
    \draw [black] (13) to (11);
    \draw [black] (12) to (2);
    \draw [black] (11) to (1);
    \draw [black] (11) to (4);
    \draw [black] (10) to (7);
    \draw [black] (10) to (8);
    \draw [black] (10) to (9);
    \draw [black] (8) to (5);
    \draw [black] (8) to (6);
    \draw [black] (7) to (4);
    \draw [black] (5) to (3);
    \draw [black] (5) to (4);
    \draw [black] (3) to (1);
    \draw [black] (2) to (1);
\end{tikzpicture}
\includegraphics[scale=0.4]{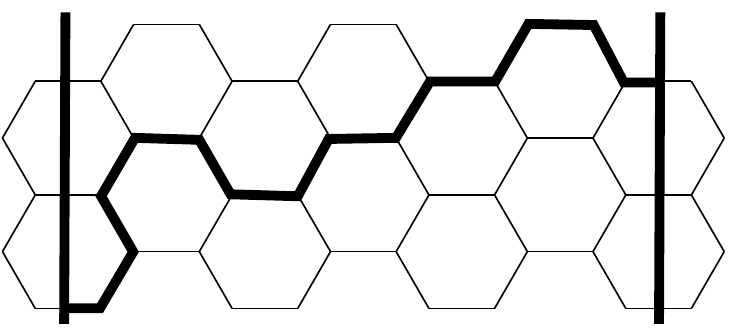}
}
\quad
\subfigure[$(6, 0)$]{
\begin{tikzpicture}[scale=0.015,thick]
    \coordinate (18) at (99.999999,50.047717);
    \coordinate (17) at (82.885318,31.072052);
    \coordinate (16) at (75.091460,6.720217);
    \coordinate (15) at (50.055671,12.064577);
    \coordinate (14) at (25.035788,6.688403);
    \coordinate (13) at (17.162399,31.008430);
    \coordinate (12) at (0.000000,49.984094);
    \coordinate (11) at (66.263719,40.631462);
    \coordinate (10) at (50.023860,31.199299);
    \coordinate (9) at (33.752189,40.583744);
    \coordinate (8) at (33.736282,59.368538);
    \coordinate (7) at (17.130589,68.927947);
    \coordinate (6) at (24.988072,93.263878);
    \coordinate (5) at (74.948306,93.311596);
    \coordinate (4) at (82.853509,69.007476);
    \coordinate (3) at (66.279626,59.416255);
    \coordinate (2) at (49.992048,68.784794);
    \coordinate (1) at (49.944330,87.919517);
    \draw [black] (18) to (17);
    \draw [black] (18) to (4);
    \draw [black] (17) to (16);
    \draw [black] (17) to (11);
    \draw [black] (16) to (15);
    \draw [black] (15) to (14);
    \draw [black] (15) to (10);
    \draw [black] (14) to (13);
    \draw [black] (13) to (12);
    \draw [black] (13) to (9);
    \draw [black] (12) to (7);
    \draw [black] (11) to (10);
    \draw [black] (11) to (3);
    \draw [black] (10) to (9);
    \draw [black] (9) to (8);
    \draw [black] (8) to (7);
    \draw [black] (8) to (2);
    \draw [black] (7) to (6);
    \draw [black] (6) to (1);
    \draw [black] (5) to (4);
    \draw [black] (5) to (1);
    \draw [black] (4) to (3);
    \draw [black] (3) to (2);
    \draw [black] (2) to (1);
\end{tikzpicture}
\includegraphics[scale=0.4]{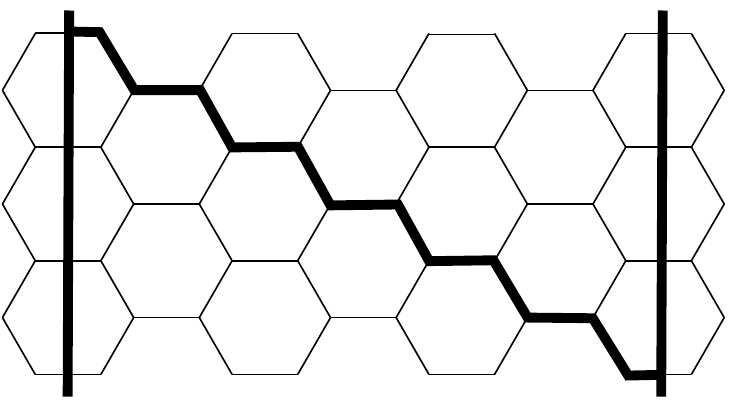}
}
\quad
\subfigure[$(6, 0)$]{
\begin{tikzpicture}[scale=0.015,thick]
    \definecolor{marked}{rgb}{0.25,0.5,0.25}
    \coordinate (19) at (22.491762,9.014675);
    \coordinate (18) at (0.014973,22.671457);
    \coordinate (17) at (8.415694,47.663969);
    \coordinate (16) at (73.554955,0.000000);
    \coordinate (15) at (99.386044,5.166214);
    \coordinate (14) at (99.985026,31.491462);
    \coordinate (13) at (41.734052,87.376456);
    \coordinate (12) at (64.839772,99.999999);
    \coordinate (11) at (46.705600,71.728059);
    \coordinate (10) at (33.513026,21.203951);
    \coordinate (9) at (67.325546,15.214135);
    \coordinate (8) at (51.033243,21.638214);
    \coordinate (7) at (24.678047,45.447737);
    \coordinate (6) at (83.932315,34.980531);
    \coordinate (5) at (54.761904,42.632523);
    \coordinate (4) at (38.364778,56.349204);
    \coordinate (3) at (74.797843,49.940100);
    \coordinate (2) at (82.225216,80.248576);
    \coordinate (1) at (72.162324,67.250673);
    \draw [black] (19) to (18);
    \draw [black] (19) to (10);
    \draw [black] (18) to (17);
    \draw [black] (17) to (7);
    \draw [black] (16) to (15);
    \draw [black] (16) to (9);
    \draw [black] (15) to (14);
    \draw [black] (14) to (6);
    \draw [black] (13) to (12);
    \draw [black] (13) to (11);
    \draw [black] (12) to (2);
    \draw [black] (11) to (1);
    \draw [black] (11) to (4);
    \draw [black] (10) to (7);
    \draw [black] (10) to (8);
    \draw [black] (9) to (6);
    \draw [black] (9) to (8);
    \draw [black] (8) to (5);
    \draw [black] (7) to (4);
    \draw [black] (6) to (3);
    \draw [black] (5) to (3);
    \draw [black] (5) to (4);
    \draw [black] (3) to (1);
    \draw [black] (2) to (1);
\end{tikzpicture}
\includegraphics[scale=0.4]{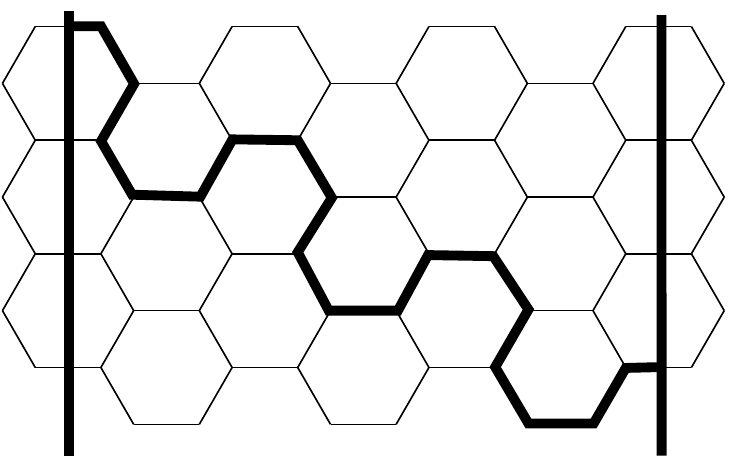}
}
\quad
\subfigure[$(6, 0)$]{
\begin{tikzpicture}[scale=0.015,thick]
    \definecolor{marked}{rgb}{0.25,0.5,0.25}
    \coordinate (18) at (49.516873,4.023652);
    \coordinate (17) at (78.850590,0.000000);
    \coordinate (16) at (90.056244,27.372366);
    \coordinate (15) at (23.197289,34.554368);
    \coordinate (14) at (76.831554,65.488893);
    \coordinate (13) at (9.943755,72.627630);
    \coordinate (12) at (21.163831,99.999999);
    \coordinate (11) at (24.653880,69.195269);
    \coordinate (10) at (53.901067,18.445342);
    \coordinate (9) at (75.389384,30.819151);
    \coordinate (8) at (53.785693,43.495816);
    \coordinate (7) at (40.546582,28.223246);
    \coordinate (6) at (73.644361,47.303143);
    \coordinate (5) at (46.271993,56.533024);
    \coordinate (4) at (26.427747,52.725698);
    \coordinate (3) at (59.482261,71.805594);
    \coordinate (2) at (50.584078,96.034033);
    \coordinate (1) at (46.127776,81.612343);
    \draw [black] (18) to (17);
    \draw [black] (18) to (10);
    \draw [black] (17) to (16);
    \draw [black] (16) to (9);
    \draw [black] (15) to (4);
    \draw [black] (15) to (7);
    \draw [black] (14) to (3);
    \draw [black] (14) to (6);
    \draw [black] (13) to (12);
    \draw [black] (13) to (11);
    \draw [black] (12) to (2);
    \draw [black] (11) to (1);
    \draw [black] (11) to (4);
    \draw [black] (10) to (9);
    \draw [black] (10) to (7);
    \draw [black] (9) to (6);
    \draw [black] (8) to (5);
    \draw [black] (8) to (6);
    \draw [black] (8) to (7);
    \draw [black] (5) to (3);
    \draw [black] (5) to (4);
    \draw [black] (3) to (1);
    \draw [black] (2) to (1);
\end{tikzpicture}
\includegraphics[scale=0.4]{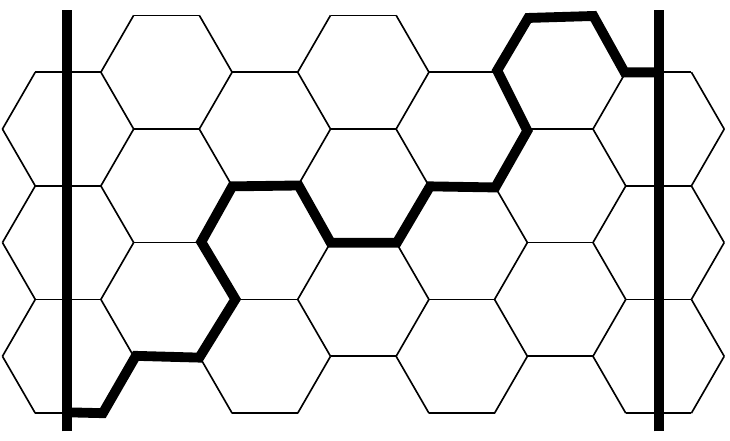}
}
\quad
\subfigure[$(6, 0)$]{
\begin{tikzpicture}[scale=0.015,thick]
    \definecolor{marked}{rgb}{0.25,0.5,0.25}
    \coordinate (18) at (16.033930,13.266583);
    \coordinate (17) at (40.356000,12.724238);
    \coordinate (16) at (0.000000,52.037268);
    \coordinate (15) at (16.854400,69.559170);
    \coordinate (14) at (83.173410,30.468641);
    \coordinate (13) at (99.999999,48.032262);
    \coordinate (12) at (59.699624,87.275761);
    \coordinate (11) at (71.589487,58.948685);
    \coordinate (10) at (12.543456,34.529271);
    \coordinate (9) at (28.410513,41.065220);
    \coordinate (8) at (44.388818,27.715198);
    \coordinate (7) at (30.315671,61.799470);
    \coordinate (6) at (54.762897,38.492559);
    \coordinate (5) at (45.264914,61.493532);
    \coordinate (4) at (69.698233,38.200528);
    \coordinate (3) at (55.625086,72.284799);
    \coordinate (2) at (83.979975,86.733416);
    \coordinate (1) at (87.442634,65.498538);
    \draw [black] (18) to (17);
    \draw [black] (18) to (10);
    \draw [black] (17) to (8);
    \draw [black] (16) to (15);
    \draw [black] (16) to (10);
    \draw [black] (15) to (7);
    \draw [black] (14) to (13);
    \draw [black] (14) to (4);
    \draw [black] (13) to (1);
    \draw [black] (12) to (2);
    \draw [black] (12) to (3);
    \draw [black] (11) to (1);
    \draw [black] (11) to (4);
    \draw [black] (11) to (3);
    \draw [black] (10) to (9);
    \draw [black] (9) to (7);
    \draw [black] (9) to (8);
    \draw [black] (8) to (6);
    \draw [black] (7) to (5);
    \draw [black] (6) to (5);
    \draw [black] (6) to (4);
    \draw [black] (5) to (3);
    \draw [black] (2) to (1);
\end{tikzpicture}
\includegraphics[scale=0.4]{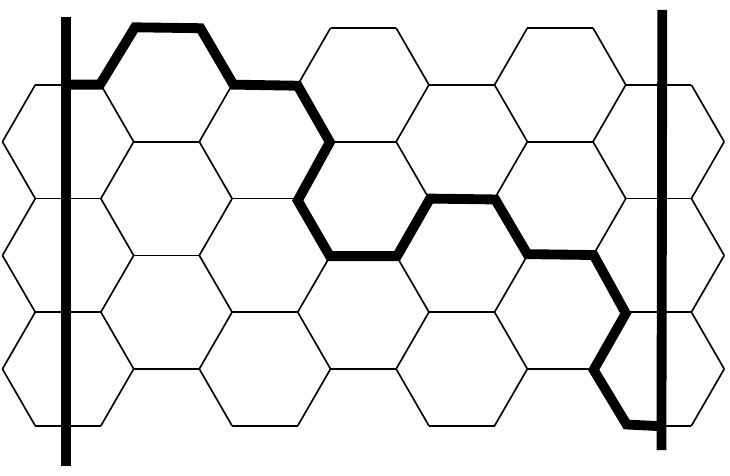}
}
\quad
\subfigure[$(4, 3)$]{
\begin{tikzpicture}[scale=0.015,thick]
    \definecolor{marked}{rgb}{0.25,0.5,0.25}
    \coordinate (19) at (22.204656,0.000000);
    \coordinate (18) at (1.408450,14.975568);
    \coordinate (17) at (10.002874,41.894222);
    \coordinate (16) at (52.069560,10.074734);
    \coordinate (15) at (88.890486,27.565392);
    \coordinate (14) at (98.591549,45.731531);
    \coordinate (13) at (44.797356,85.915492);
    \coordinate (12) at (70.465651,99.999999);
    \coordinate (11) at (49.885025,72.851394);
    \coordinate (10) at (32.466227,13.797069);
    \coordinate (9) at (24.590401,38.976717);
    \coordinate (8) at (54.512791,27.062373);
    \coordinate (7) at (37.510779,45.688416);
    \coordinate (6) at (70.652486,32.106927);
    \coordinate (5) at (66.527738,49.813164);
    \coordinate (4) at (45.515953,58.450704);
    \coordinate (3) at (84.075884,57.875826);
    \coordinate (2) at (89.149180,83.932163);
    \coordinate (1) at (76.602472,72.822649);
    \draw [black] (19) to (18);
    \draw [black] (19) to (10);
    \draw [black] (18) to (17);
    \draw [black] (17) to (9);
    \draw [black] (16) to (8);
    \draw [black] (16) to (10);
    \draw [black] (15) to (14);
    \draw [black] (15) to (6);
    \draw [black] (14) to (3);
    \draw [black] (13) to (12);
    \draw [black] (13) to (11);
    \draw [black] (12) to (2);
    \draw [black] (11) to (1);
    \draw [black] (11) to (4);
    \draw [black] (10) to (9);
    \draw [black] (9) to (7);
    \draw [black] (8) to (7);
    \draw [black] (8) to (6);
    \draw [black] (7) to (4);
    \draw [black] (6) to (5);
    \draw [black] (5) to (3);
    \draw [black] (5) to (4);
    \draw [black] (3) to (1);
    \draw [black] (2) to (1);
\end{tikzpicture}
\includegraphics[scale=0.4]{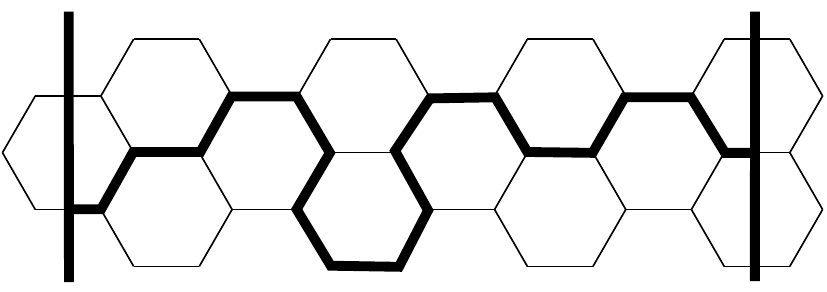}
}
\quad
\subfigure[$(4, 3)$]{
\begin{tikzpicture}[scale=0.015,thick]
    \definecolor{marked}{rgb}{0.25,0.5,0.25}
    \coordinate (18) at (54.866768,0.000000);
    \coordinate (17) at (77.647384,7.414057);
    \coordinate (16) at (28.178931,22.394037);
    \coordinate (15) at (84.094988,45.423168);
    \coordinate (14) at (16.139723,55.184316);
    \coordinate (13) at (15.905011,83.446087);
    \coordinate (12) at (40.452852,99.999999);
    \coordinate (11) at (25.514290,72.166230);
    \coordinate (10) at (60.582632,25.321001);
    \coordinate (9) at (45.423167,13.337016);
    \coordinate (8) at (79.041835,25.790418);
    \coordinate (7) at (55.474251,39.099821);
    \coordinate (6) at (35.675824,37.235953);
    \coordinate (5) at (69.018363,53.292835);
    \coordinate (4) at (32.914539,50.890516);
    \coordinate (3) at (55.985089,62.032307);
    \coordinate (2) at (64.365594,85.006213);
    \coordinate (1) at (52.878641,74.361452);
    \draw [black] (18) to (17);
    \draw [black] (18) to (9);
    \draw [black] (17) to (8);
    \draw [black] (16) to (6);
    \draw [black] (16) to (9);
    \draw [black] (15) to (5);
    \draw [black] (15) to (8);
    \draw [black] (14) to (11);
    \draw [black] (14) to (4);
    \draw [black] (13) to (12);
    \draw [black] (13) to (11);
    \draw [black] (12) to (2);
    \draw [black] (11) to (1);
    \draw [black] (10) to (7);
    \draw [black] (10) to (8);
    \draw [black] (10) to (9);
    \draw [black] (7) to (5);
    \draw [black] (7) to (6);
    \draw [black] (6) to (4);
    \draw [black] (5) to (3);
    \draw [black] (4) to (3);
    \draw [black] (3) to (1);
    \draw [black] (2) to (1);
\end{tikzpicture}
\includegraphics[scale=0.4]{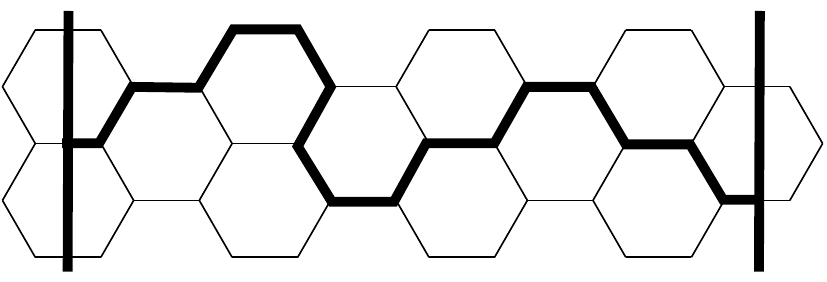}
}
\quad
\subfigure[$(5, 2)$]{
\begin{tikzpicture}[scale=0.015,thick]
    \definecolor{marked}{rgb}{0.25,0.5,0.25}
    \coordinate (19) at (45.586932,3.102443);
    \coordinate (18) at (25.186778,0.000000);
    \coordinate (17) at (15.600860,22.109660);
    \coordinate (16) at (24.667595,54.083829);
    \coordinate (15) at (75.610992,26.136507);
    \coordinate (14) at (84.399140,45.612258);
    \coordinate (13) at (42.421172,89.261747);
    \coordinate (12) at (61.972901,99.999999);
    \coordinate (11) at (49.309864,76.155502);
    \coordinate (10) at (42.484487,19.399770);
    \coordinate (9) at (25.123465,32.911232);
    \coordinate (8) at (48.942636,30.454603);
    \coordinate (7) at (37.558567,53.007471);
    \coordinate (6) at (61.985564,30.239330);
    \coordinate (5) at (60.769912,49.803723);
    \coordinate (4) at (45.979486,62.327466);
    \coordinate (3) at (73.990123,57.819425);
    \coordinate (2) at (77.801696,84.424464);
    \coordinate (1) at (68.785615,73.546916);
    \draw [black] (19) to (18);
    \draw [black] (19) to (10);
    \draw [black] (18) to (17);
    \draw [black] (17) to (9);
    \draw [black] (16) to (7);
    \draw [black] (16) to (9);
    \draw [black] (15) to (14);
    \draw [black] (15) to (6);
    \draw [black] (14) to (3);
    \draw [black] (13) to (12);
    \draw [black] (13) to (11);
    \draw [black] (12) to (2);
    \draw [black] (11) to (1);
    \draw [black] (11) to (4);
    \draw [black] (10) to (9);
    \draw [black] (10) to (8);
    \draw [black] (8) to (7);
    \draw [black] (8) to (6);
    \draw [black] (7) to (4);
    \draw [black] (6) to (5);
    \draw [black] (5) to (3);
    \draw [black] (5) to (4);
    \draw [black] (3) to (1);
    \draw [black] (2) to (1);
\end{tikzpicture}
\includegraphics[scale=0.4]{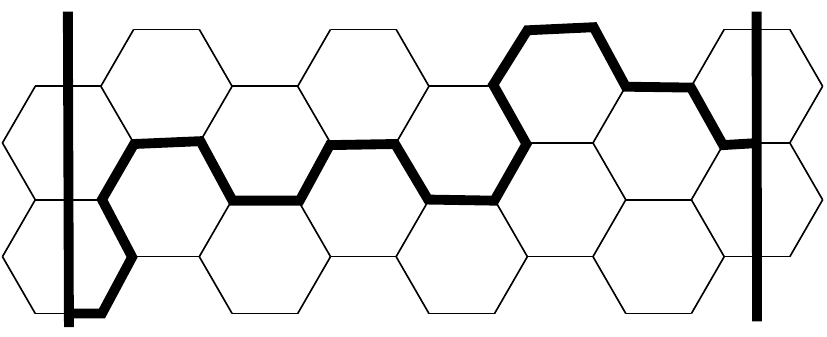}
}
\quad
\subfigure[$(5, 2)$]{
\begin{tikzpicture}[scale=0.015,thick]
    \definecolor{marked}{rgb}{0.25,0.5,0.25}
    \coordinate (19) at (54.167214,0.000000);
    \coordinate (18) at (36.562045,10.272976);
    \coordinate (17) at (86.568640,21.561387);
    \coordinate (16) at (83.904785,44.903071);
    \coordinate (15) at (22.768033,41.461161);
    \coordinate (14) at (13.431359,70.090992);
    \coordinate (13) at (23.677963,96.136093);
    \coordinate (12) at (53.389158,99.999999);
    \coordinate (11) at (26.038505,82.144268);
    \coordinate (10) at (68.501910,15.046813);
    \coordinate (9) at (61.143345,28.919950);
    \coordinate (8) at (43.327177,25.003296);
    \coordinate (7) at (69.385466,47.092178);
    \coordinate (6) at (38.487405,38.441249);
    \coordinate (5) at (56.382697,53.132004);
    \coordinate (4) at (25.365948,58.182776);
    \coordinate (3) at (49.419753,63.615982);
    \coordinate (2) at (64.862189,83.858631);
    \coordinate (1) at (51.595673,75.748382);
    \draw [black] (19) to (18);
    \draw [black] (19) to (10);
    \draw [black] (18) to (8);
    \draw [black] (17) to (16);
    \draw [black] (17) to (10);
    \draw [black] (16) to (7);
    \draw [black] (15) to (4);
    \draw [black] (15) to (6);
    \draw [black] (14) to (11);
    \draw [black] (14) to (4);
    \draw [black] (13) to (12);
    \draw [black] (13) to (11);
    \draw [black] (12) to (2);
    \draw [black] (11) to (1);
    \draw [black] (10) to (9);
    \draw [black] (9) to (7);
    \draw [black] (9) to (8);
    \draw [black] (8) to (6);
    \draw [black] (7) to (5);
    \draw [black] (6) to (5);
    \draw [black] (5) to (3);
    \draw [black] (4) to (3);
    \draw [black] (3) to (1);
    \draw [black] (2) to (1);
\end{tikzpicture}
\includegraphics[scale=0.4]{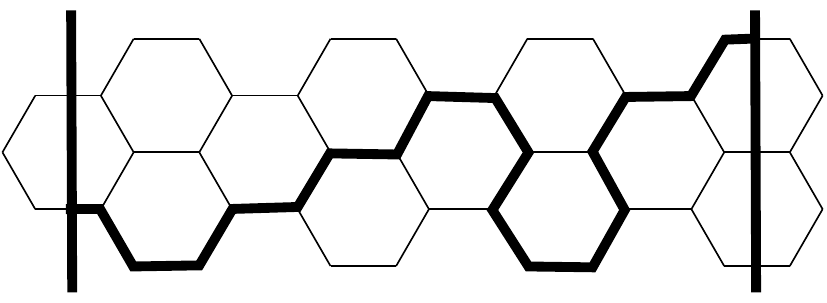}
}
\quad
\subfigure[$(6, 1)$]{
\begin{tikzpicture}[scale=0.015,thick]
    \coordinate (20) at (55.727014,6.262909);
    \coordinate (19) at (72.991993,0.000000);
    \coordinate (18) at (85.117510,18.620864);
    \coordinate (17) at (79.745611,50.309916);
    \coordinate (16) at (35.556559,30.746383);
    \coordinate (15) at (16.509554,53.267044);
    \coordinate (14) at (14.882489,80.810948);
    \coordinate (13) at (42.865443,99.999999);
    \coordinate (12) at (70.086518,98.450414);
    \coordinate (11) at (36.847880,87.009296);
    \coordinate (10) at (60.388688,22.301134);
    \coordinate (9) at (77.782800,31.572829);
    \coordinate (8) at (51.930526,32.773758);
    \coordinate (7) at (64.314307,51.071796);
    \coordinate (6) at (32.366992,46.151858);
    \coordinate (5) at (52.924844,57.541321);
    \coordinate (4) at (21.933109,67.432850);
    \coordinate (3) at (48.870092,67.781507);
    \coordinate (2) at (75.729597,82.838325);
    \coordinate (1) at (57.547778,77.789256);
    \draw [black] (20) to (19);
    \draw [black] (20) to (10);
    \draw [black] (19) to (18);
    \draw [black] (18) to (9);
    \draw [black] (17) to (7);
    \draw [black] (17) to (9);
    \draw [black] (16) to (6);
    \draw [black] (16) to (8);
    \draw [black] (15) to (4);
    \draw [black] (15) to (6);
    \draw [black] (14) to (11);
    \draw [black] (14) to (4);
    \draw [black] (13) to (12);
    \draw [black] (13) to (11);
    \draw [black] (12) to (2);
    \draw [black] (11) to (1);
    \draw [black] (10) to (9);
    \draw [black] (10) to (8);
    \draw [black] (8) to (7);
    \draw [black] (7) to (5);
    \draw [black] (6) to (5);
    \draw [black] (5) to (3);
    \draw [black] (4) to (3);
    \draw [black] (3) to (1);
    \draw [black] (2) to (1);
\end{tikzpicture}
\includegraphics[scale=0.4]{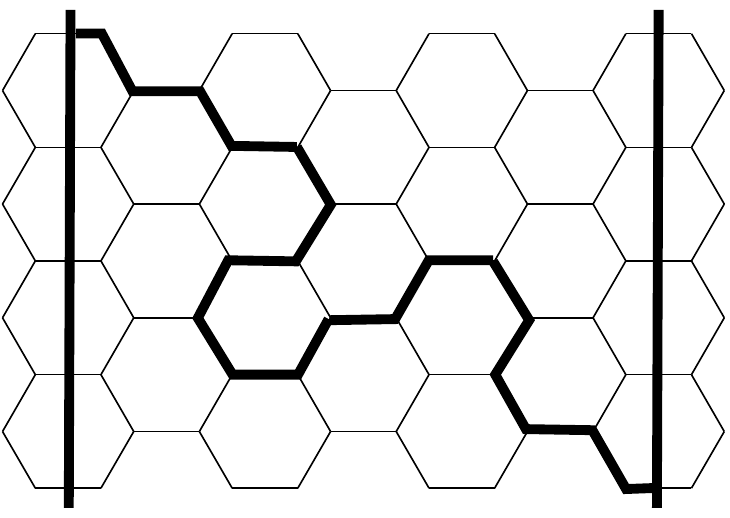}
}
\quad
\subfigure[$(6, 1)$]{
\begin{tikzpicture}[scale=0.015,thick]
    \definecolor{marked}{rgb}{0.25,0.5,0.25}
    \coordinate (19) at (40.443858,0.000000);
    \coordinate (18) at (24.261285,15.025776);
    \coordinate (17) at (79.171383,14.422229);
    \coordinate (16) at (81.698731,35.143970);
    \coordinate (15) at (64.900038,60.430026);
    \coordinate (14) at (18.301269,51.766629);
    \coordinate (13) at (21.645921,84.169497);
    \coordinate (12) at (40.179807,99.999999);
    \coordinate (11) at (25.543820,70.325663);
    \coordinate (10) at (58.185591,10.612346);
    \coordinate (9) at (53.269207,23.311958);
    \coordinate (8) at (34.659877,25.675845);
    \coordinate (7) at (67.100467,36.225323);
    \coordinate (6) at (37.690180,38.614359);
    \coordinate (5) at (59.556144,46.271847);
    \coordinate (4) at (31.290080,48.145354);
    \coordinate (3) at (49.446750,63.397460);
    \coordinate (2) at (55.909720,88.582924);
    \coordinate (1) at (45.599146,75.392932);
    \draw [black] (19) to (18);
    \draw [black] (19) to (10);
    \draw [black] (18) to (8);
    \draw [black] (17) to (16);
    \draw [black] (17) to (10);
    \draw [black] (16) to (7);
    \draw [black] (15) to (3);
    \draw [black] (15) to (5);
    \draw [black] (14) to (11);
    \draw [black] (14) to (4);
    \draw [black] (13) to (12);
    \draw [black] (13) to (11);
    \draw [black] (12) to (2);
    \draw [black] (11) to (1);
    \draw [black] (10) to (9);
    \draw [black] (9) to (7);
    \draw [black] (9) to (8);
    \draw [black] (8) to (6);
    \draw [black] (7) to (5);
    \draw [black] (6) to (5);
    \draw [black] (6) to (4);
    \draw [black] (4) to (3);
    \draw [black] (3) to (1);
    \draw [black] (2) to (1);
\end{tikzpicture}
\includegraphics[scale=0.4]{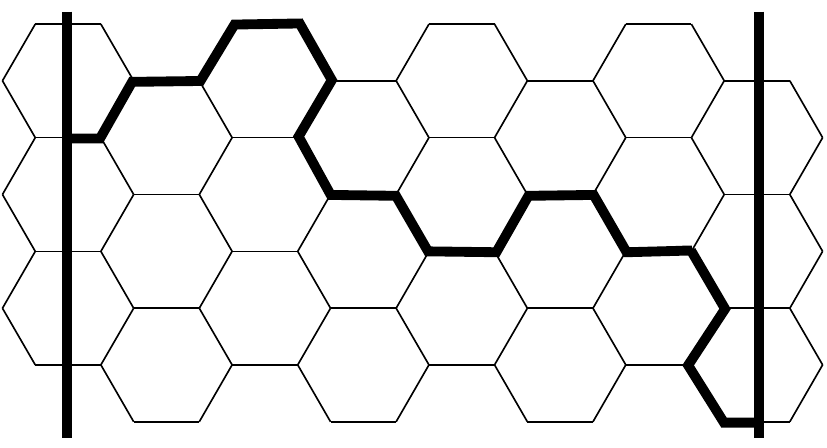}
}
\quad
\subfigure[$(7, 0)$]{
\begin{tikzpicture}[scale=0.015,thick]
    \definecolor{marked}{rgb}{0.25,0.5,0.25}
    \coordinate (20) at (20.708748,15.946842);
    \coordinate (19) at (32.048726,0.000000);
    \coordinate (18) at (50.033223,5.359911);
    \coordinate (17) at (29.590255,40.166112);
    \coordinate (16) at (71.262458,28.815060);
    \coordinate (15) at (79.291252,53.654485);
    \coordinate (14) at (38.914729,68.903655);
    \coordinate (13) at (45.935769,93.743077);
    \coordinate (12) at (64.440753,99.999999);
    \coordinate (11) at (48.316722,80.719822);
    \coordinate (10) at (30.276854,25.071982);
    \coordinate (9) at (46.744186,17.995568);
    \coordinate (8) at (43.875969,41.040974);
    \coordinate (7) at (55.825028,27.198227);
    \coordinate (6) at (52.624585,48.527132);
    \coordinate (5) at (71.561462,42.436323);
    \coordinate (4) at (49.922481,59.745293);
    \coordinate (3) at (67.729791,63.975636);
    \coordinate (2) at (75.869324,85.105206);
    \coordinate (1) at (65.791805,76.777409);
    \draw [black] (20) to (19);
    \draw [black] (20) to (10);
    \draw [black] (19) to (18);
    \draw [black] (18) to (9);
    \draw [black] (17) to (8);
    \draw [black] (17) to (10);
    \draw [black] (16) to (5);
    \draw [black] (16) to (7);
    \draw [black] (15) to (3);
    \draw [black] (15) to (5);
    \draw [black] (14) to (11);
    \draw [black] (14) to (4);
    \draw [black] (13) to (12);
    \draw [black] (13) to (11);
    \draw [black] (12) to (2);
    \draw [black] (11) to (1);
    \draw [black] (10) to (9);
    \draw [black] (9) to (7);
    \draw [black] (8) to (7);
    \draw [black] (8) to (6);
    \draw [black] (6) to (5);
    \draw [black] (6) to (4);
    \draw [black] (4) to (3);
    \draw [black] (3) to (1);
    \draw [black] (2) to (1);
\end{tikzpicture}
\includegraphics[scale=0.4]{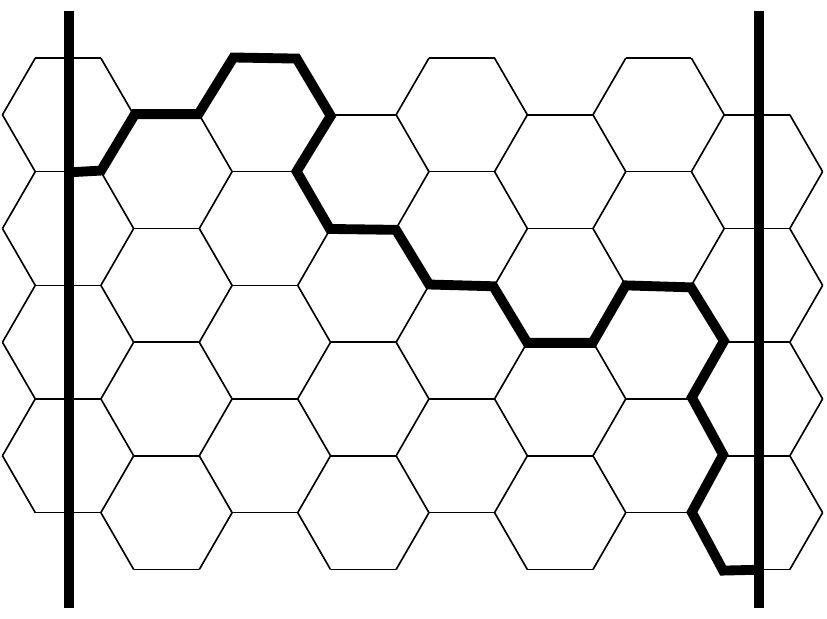}
}
\quad
\subfigure[$(4, 4)$]{
\begin{tikzpicture}[scale=0.015,thick]
    \definecolor{marked}{rgb}{0.25,0.5,0.25}
    \coordinate (20) at (28.062968,15.602477);
    \coordinate (19) at (35.560754,0.568973);
    \coordinate (18) at (63.427740,0.000000);
    \coordinate (17) at (78.524465,23.049689);
    \coordinate (16) at (74.617524,52.585661);
    \coordinate (15) at (25.395120,47.363762);
    \coordinate (14) at (21.475534,76.849158);
    \coordinate (13) at (36.483754,99.999999);
    \coordinate (12) at (64.249590,99.431026);
    \coordinate (11) at (34.675687,86.863067);
    \coordinate (10) at (44.145910,21.266913);
    \coordinate (9) at (65.248452,13.086358);
    \coordinate (8) at (48.128715,33.101529);
    \coordinate (7) at (70.242761,36.553293);
    \coordinate (6) at (40.984955,43.355671);
    \coordinate (5) at (59.052978,56.606397);
    \coordinate (4) at (29.820459,63.332911);
    \coordinate (3) at (51.921862,66.873181);
    \coordinate (2) at (71.899103,84.422808);
    \coordinate (1) at (55.828803,78.720444);
    \draw [black] (20) to (19);
    \draw [black] (20) to (10);
    \draw [black] (19) to (18);
    \draw [black] (18) to (9);
    \draw [black] (17) to (7);
    \draw [black] (17) to (9);
    \draw [black] (16) to (5);
    \draw [black] (16) to (7);
    \draw [black] (15) to (4);
    \draw [black] (15) to (6);
    \draw [black] (14) to (11);
    \draw [black] (14) to (4);
    \draw [black] (13) to (12);
    \draw [black] (13) to (11);
    \draw [black] (12) to (2);
    \draw [black] (11) to (1);
    \draw [black] (10) to (9);
    \draw [black] (10) to (8);
    \draw [black] (8) to (7);
    \draw [black] (8) to (6);
    \draw [black] (6) to (5);
    \draw [black] (5) to (3);
    \draw [black] (4) to (3);
    \draw [black] (3) to (1);
    \draw [black] (2) to (1);
\end{tikzpicture}
\includegraphics[scale=0.4]{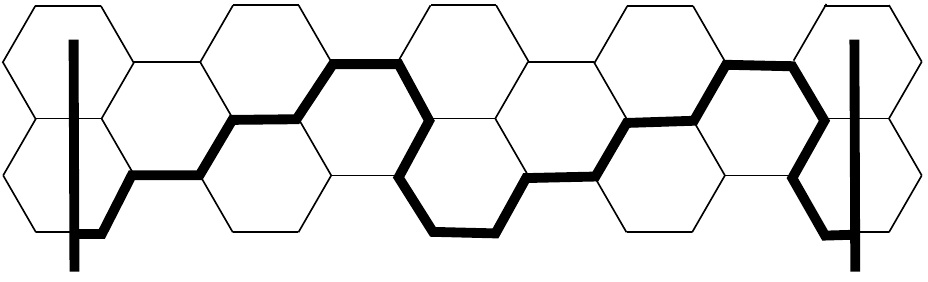}
}
\quad
\subfigure[$(5, 3)$]{
\begin{tikzpicture}[scale=0.015,thick]
    \definecolor{marked}{rgb}{0.25,0.5,0.25}
    \coordinate (20) at (65.262656,0.000000);
    \coordinate (19) at (89.609416,5.960805);
    \coordinate (18) at (91.174467,27.191071);
    \coordinate (17) at (39.133098,14.833967);
    \coordinate (16) at (72.611594,54.559062);
    \coordinate (15) at (19.454273,38.704408);
    \coordinate (14) at (8.825532,65.949917);
    \coordinate (13) at (21.005716,94.433857);
    \coordinate (12) at (53.871802,99.999999);
    \coordinate (11) at (21.863092,80.307566);
    \coordinate (10) at (57.981764,11.554164);
    \coordinate (9) at (74.652967,26.959716);
    \coordinate (8) at (45.815188,30.035382);
    \coordinate (7) at (67.780349,38.486662);
    \coordinate (6) at (38.017148,40.242242);
    \coordinate (5) at (55.287153,56.573216);
    \coordinate (4) at (20.828797,55.538921);
    \coordinate (3) at (45.298040,64.847576);
    \coordinate (2) at (65.072129,85.655958);
    \coordinate (1) at (47.638813,77.150243);
    \draw [black] (20) to (19);
    \draw [black] (20) to (10);
    \draw [black] (19) to (18);
    \draw [black] (18) to (9);
    \draw [black] (17) to (8);
    \draw [black] (17) to (10);
    \draw [black] (16) to (5);
    \draw [black] (16) to (7);
    \draw [black] (15) to (4);
    \draw [black] (15) to (6);
    \draw [black] (14) to (11);
    \draw [black] (14) to (4);
    \draw [black] (13) to (12);
    \draw [black] (13) to (11);
    \draw [black] (12) to (2);
    \draw [black] (11) to (1);
    \draw [black] (10) to (9);
    \draw [black] (9) to (7);
    \draw [black] (8) to (7);
    \draw [black] (8) to (6);
    \draw [black] (6) to (5);
    \draw [black] (5) to (3);
    \draw [black] (4) to (3);
    \draw [black] (3) to (1);
    \draw [black] (2) to (1);
\end{tikzpicture}
\includegraphics[scale=0.4]{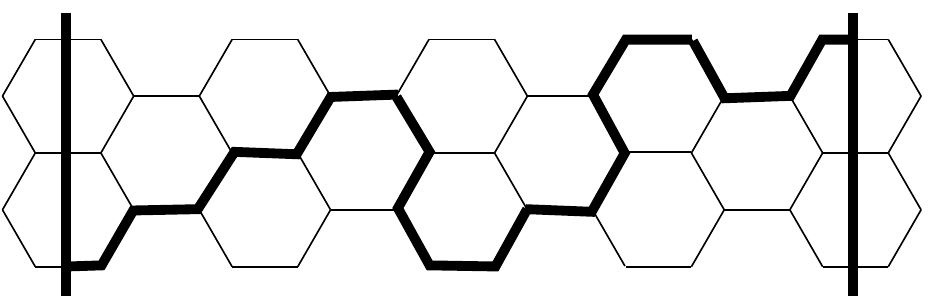}
}
\quad
\subfigure[$(6, 2)$]{
\begin{tikzpicture}[scale=0.015,thick]
    \definecolor{marked}{rgb}{0.25,0.5,0.25}
    \coordinate (20) at (35.728335,10.644453);
    \coordinate (19) at (51.856994,0.000000);
    \coordinate (18) at (70.704616,14.046048);
    \coordinate (17) at (75.494618,39.743145);
    \coordinate (16) at (30.614370,35.427515);
    \coordinate (15) at (69.397199,64.549346);
    \coordinate (14) at (24.505381,60.256856);
    \coordinate (13) at (29.202823,85.930810);
    \coordinate (12) at (48.062015,99.999999);
    \coordinate (11) at (33.182922,75.448340);
    \coordinate (10) at (46.615758,20.814532);
    \coordinate (9) at (66.782367,24.540090);
    \coordinate (8) at (43.862085,30.984612);
    \coordinate (7) at (62.281614,42.161287);
    \coordinate (6) at (40.194377,47.830615);
    \coordinate (5) at (59.782482,52.134676);
    \coordinate (4) at (37.706815,57.804003);
    \coordinate (3) at (56.126345,69.003818);
    \coordinate (2) at (64.213814,89.355547);
    \coordinate (1) at (53.326391,79.185468);
    \draw [black] (20) to (19);
    \draw [black] (20) to (10);
    \draw [black] (19) to (18);
    \draw [black] (18) to (9);
    \draw [black] (17) to (7);
    \draw [black] (17) to (9);
    \draw [black] (16) to (6);
    \draw [black] (16) to (8);
    \draw [black] (15) to (3);
    \draw [black] (15) to (5);
    \draw [black] (14) to (11);
    \draw [black] (14) to (4);
    \draw [black] (13) to (12);
    \draw [black] (13) to (11);
    \draw [black] (12) to (2);
    \draw [black] (11) to (1);
    \draw [black] (10) to (9);
    \draw [black] (10) to (8);
    \draw [black] (8) to (7);
    \draw [black] (7) to (5);
    \draw [black] (6) to (5);
    \draw [black] (6) to (4);
    \draw [black] (4) to (3);
    \draw [black] (3) to (1);
    \draw [black] (2) to (1);
\end{tikzpicture}
\includegraphics[scale=0.4]{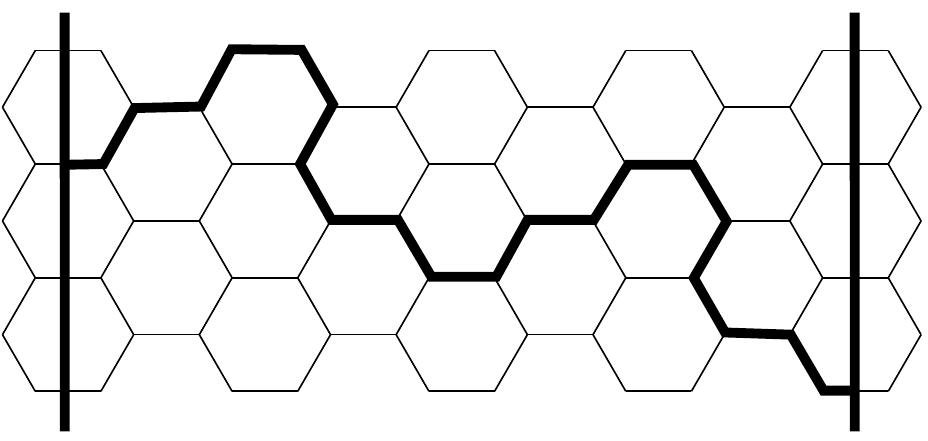}
}
\caption{All possible clusters containing 6 pentagons, with mappings
  of their boundaries into the hexagonal grid, and the tube parameters
  to which they correspond.}
\label{fig:p6}
\end{figure}

In order to identify all possible combinations including a pentagon
cluster of size 6 we only have to look at all tube caps with these
parameters.  We did this by computer.  We searched the output of the
program {\em tube} (see \cite{BNP99}) by two independent programs and
classified the caps that were generated 
based on the clusters that were found.  The results of these
searches are summarised in Table~\ref{tab:summary}.

\section{Partitions $\boldsymbol{(p_1,p_2,\dots ,p_k)}$ with $\boldsymbol{p_1<6}$}

\begin{definition}

A {\em hexagon cycle } 
in a fullerene is a cyclic sequence $h_0,\dots ,h_{n-1}$ 
of different hexagons so that for $0\le i< n$ hexagon $h_i$ shares an edge with
hexagon $h_{i+1}$ and an edge with hexagon $h_{i-1}$ where all subscripts are taken modulo $n$.

\end{definition}

We can draw a Jordan curve through the faces of a hexagon cycle in the given order.
This curve splits the set of faces of the fullerene into two parts that we will call the {\em outside} and 
the {\em inside} of the cycle.

If for two pentagons $p_1,p_2$ in a fullerene $F$ there are $k$ disjoint hexagon cycles with $p_1$ inside each of
the cycles and $p_2$ outside, then the Jordan curve theorem implies $d(p_1,p_2)\ge k$.

Our main aim in this part of the work is to prove that each partition $(p_1,p_2,\dots ,p_k)$ with $p_1<6$ is in class (d).
This will be a direct consequence of the following theorem:

\begin{theorem}\label{thm:atmost5}
Let $P = (p_1,p_2,…,p_k)$ be a partition of 12 with $p_i < 6$ for $1\le i \le k$  and let $s>0$.
Then there exists a fullerene $F$ so that $\mathit{PIP}(F) =  P$, $s(F)\ge s$ and each pentagon cluster in 
$F$ is one of the clusters depicted in Figure~\ref{fig:classes}.

\end{theorem}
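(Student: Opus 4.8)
The plan is to realise the prescribed pentagon clusters as local patches sitting in a large, essentially hexagonal "host" fullerene, with every pair of clusters separated by at least $s$ rings of hexagons. First I would fix, for each admissible partition $(p_1,\dots,p_k)$ with all $p_i<6$, a concrete small patch (a "cap template") realising a cluster of size $p_i$; these are exactly the clusters listed in Figure~\ref{fig:classes}, and by Theorem~\ref{thm:hexnumbers} and the discussion preceding Lemma~\ref{lem:discon} each such patch has a boundary that can be embedded in the hexagonal lattice between two parallel lattice lines, so it closes up into (and is surrounded by) a region of pure hexagonal mesh. The key structural idea is that a partition with $p_1<6$ has $k\ge 3$ clusters (since $5\cdot 2<12$), so no two clusters need to be "antipodal'': one can distribute all $k$ clusters over a single large sphere triangulated by hexagons, with each cluster occupying one small neighbourhood and the rest of the surface filled by hexagons.

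The concrete construction I would use is the standard "fullerene patchwork'' or Goldberg-type expansion. Take an icosahedral (or octahedral) seed carrying the twelve pentagons in twelve isolated spots, inflate it by a Goldberg transform $GC_{m,0}$ with $m$ large so that the twelve original pentagon sites are pairwise at distance $\gg s$; then, in groups, \emph{merge} neighbouring pentagon sites into the desired clusters. Merging $p_i$ pentagons that are already close into a single cluster of shape one of the Figure~\ref{fig:classes} patches is a purely local surgery: replace a disc of the hexagonal mesh containing the relevant pentagons by the chosen cap template, which has the same boundary word in the hexagonal lattice (same number of boundary edges of each "turning'' type), so the surrounding hexagonal region is unaffected and the result is again a fullerene. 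Repeating this for each part $p_i$ produces a fullerene $F$ whose pentagon clusters are exactly the prescribed patches, hence $\mathit{PIP}(F)=P$, and whose clusters are separated by at least the chosen number of hexagon rings. Finally, $s(F)\ge s$ follows from the Jordan-curve observation stated just before Theorem~\ref{thm:atmost5}: between any two clusters one exhibits $s$ disjoint hexagon cycles, each with one cluster inside and the other outside, which forces $d(C,C')\ge s$.

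There are two things to be careful about, and the second is the real obstacle. First, one must check that the chosen cap templates in Figure~\ref{fig:classes} genuinely exist for \emph{every} part size $1\le p_i\le 5$ and that they tile together with hexagons --- but this is exactly the content of Theorem~\ref{thm:hexnumbers} (spiral patches with $p\le 5$ pentagons exist for all hexagon counts) together with the lattice-embedding property of their boundaries, so it is routine once the figure is fixed. Second, and harder: one must ensure that the local surgery does not \emph{accidentally fuse} two clusters or split one. This requires a quantitative separation guarantee: each cap template $T_i$ has some bounded radius $r_i$, and I would choose the Goldberg parameter $m$ (equivalently the number of separating hexagon rings) larger than $s+\max_i r_i$, so that the discs being cut out are pairwise disjoint and each is surrounded by a genuine annulus of untouched hexagons of width $\ge s$. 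Verifying that after all $k$ surgeries the complement of the clusters is connected hexagonal mesh --- so that no two of the $p_i$'s secretly belong to the same $\sim$-class --- is the step that needs the most care; it follows because each cut-out disc meets the rest of the sphere only in a hexagonal boundary annulus, but making this airtight (e.g. that an edge of the template lying on the disc boundary is never an edge shared by two pentagons) is where the proof does its real work. Once that is in hand, the theorem follows, and since the partition has $\ge 3$ parts the resulting family is infinite and $s(F)$ is unbounded, placing $P$ in class~(d).
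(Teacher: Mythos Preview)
Your high-level strategy --- Goldberg inflation plus local surgery --- is the same as the paper's, but you run the two steps in the wrong order, and this creates a real gap. You propose to start from an icosahedral seed with twelve \emph{isolated} pentagons, inflate so that all twelve are pairwise at distance $\gg s$, and only \emph{then} merge groups of $p_i$ pentagons into the prescribed clusters. But after inflation the $p_i$ pentagons you want to merge are themselves at mutual distance $\gg s$, so the ``disc of the hexagonal mesh containing the relevant pentagons'' has diameter of order $m$, not the bounded radius $r_i$ you later assume. The cap template $T_i$ from Figure~\ref{fig:classes} is a fixed small patch; it cannot have the same boundary word as a disc of diameter $m$. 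To salvage the step you would have to show that any such large disc can be refilled by $T_i$ surrounded by hexagons matching the exact boundary, with the cluster landing far from that boundary --- and neither claim follows from Theorem~\ref{thm:hexnumbers}, which only bounds boundary length from below and says nothing about realising a prescribed boundary with a prescribed cluster shape. You also never verify that for every partition $P$ the twelve icosahedral pentagon sites can be grouped so that the corresponding large surgery discs are pairwise disjoint.

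The paper avoids all of this by reversing the order. It first finds, by computer search, a seed fullerene $F_0$ that \emph{already} carries exactly the representative clusters of Figure~\ref{fig:classes} in the required multiplicities; this base case is what your argument lacks. It then applies the Goldberg $(5,0)$ operation, which turns every separating hexagon cycle into three (Figure~\ref{fig:traversal}) while isolating all pentagons, and finally reinstates each cluster by an explicit, hand-constructed replacement patch (Figures~\ref{fig:replace2}--\ref{fig:replace5}) that lives entirely inside the inflated image of the original cluster. Because each surgery is confined to the footprint of a single old cluster, disjointness of the surgery regions and separation of the resulting clusters are automatic, and iterating $k$ times gives $s(F)\ge 3^k$. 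The point you are missing is that the replacement patches are not generic fillings of an arbitrary boundary: they are specific patches designed to match the known boundary of a Goldberg-$(5,0)$-inflated representative cluster, which is why the paper can simply exhibit them.
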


\begin{figure}[htp]
\begin{center}
\includegraphics[width=130mm]{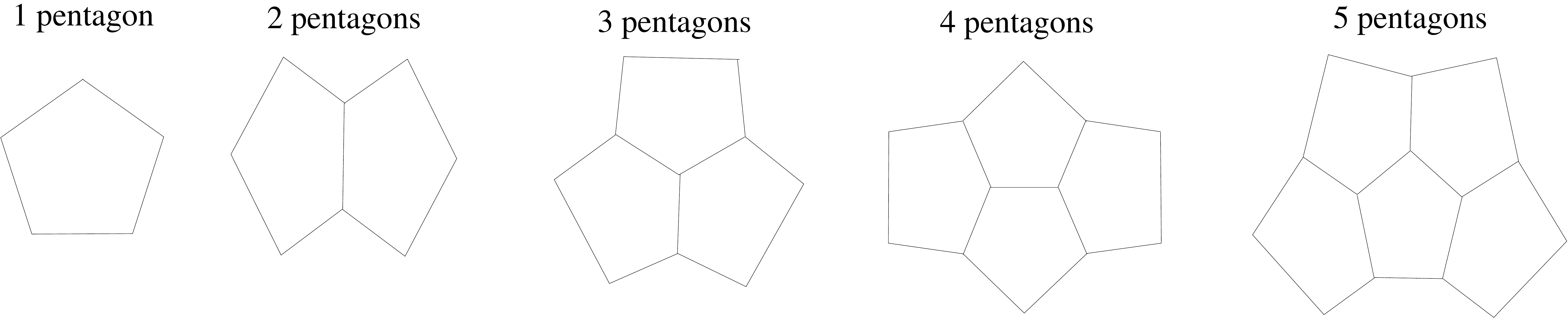}
%\begin{minipage}{\captionwidth}
\caption{\label{fig:classes} The clusters chosen to represent the classes with $1$ to $5$ pentagons. The
clusters are chosen for their symmetry and simplicity, but other choices would work equally
well.
}
%\end{minipage}
\end{center}
\end{figure}

We will define an operation that when given a fullerene with representative clusters $C_1, C_2, \dots ,C_k$
allows construction of fullerenes containing the same clusters, but with more separating hexagon cycles. We use the Goldberg
operation of type $(5,0)$. It is the smallest Goldberg operation \cite{Coxeter_virus}
preserving all 
equivalence classes of cones (see \cite{cones_article,Doug_Balaban_cone}). 
Although the construction given here is based on this observation, details of the 
effect of Goldberg operations on equivalence classes of cones 
are not needed to follow the arguments.

The easiest way to describe the Goldberg operation $(5,0)$ on a fullerene is
given in Figure~\ref{fig:5_0}. By gluing those patterns into each pentagon, resp.\ hexagon, one gets a new fullerene.
It is easy to see that in the new fullerene formerly adjacent pentagons produce pentagons at distance 5.
In the sequel, it will be also helpful to remember the previous underlying fullerene structure.

\begin{figure}[htbp]
\centering
\includegraphics[width=80mm]{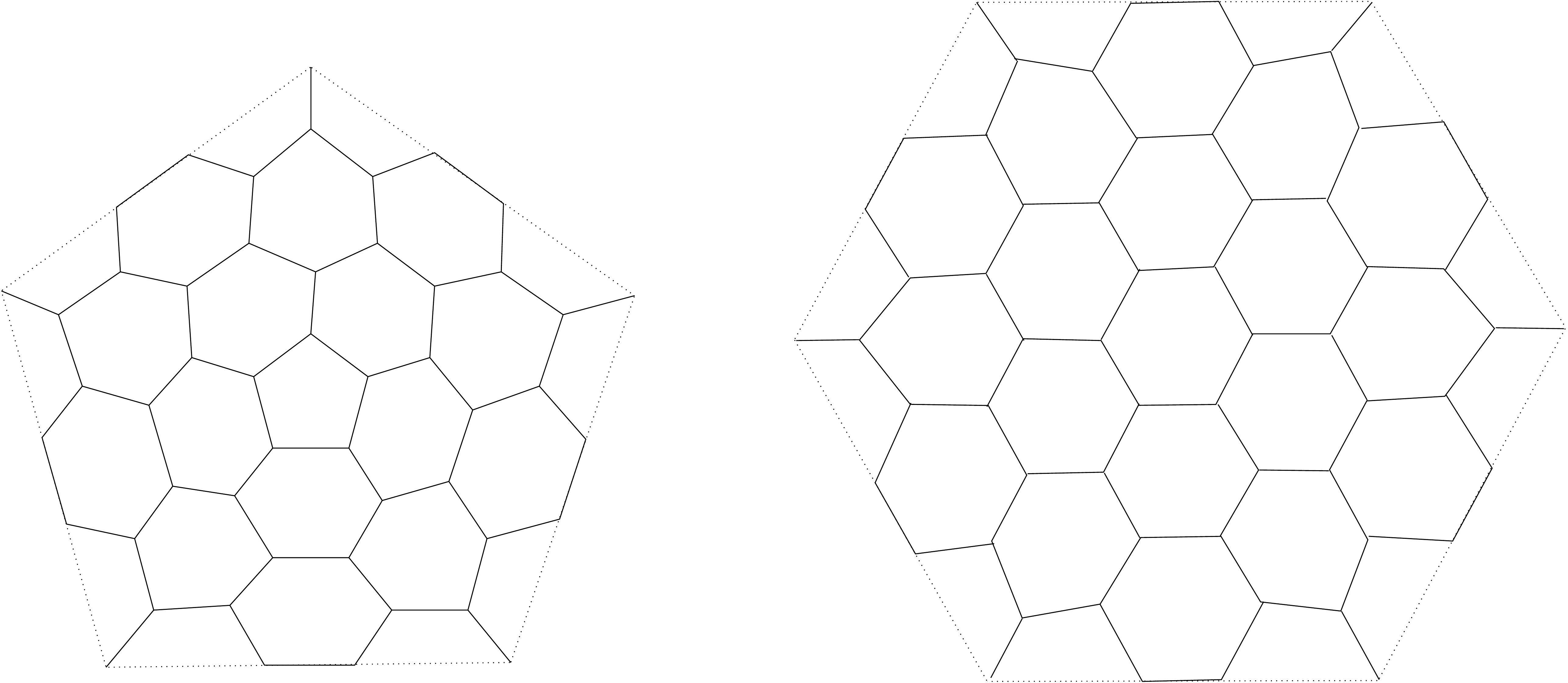}
%\begin{minipage}{\captionwidth}
\caption{\label{fig:5_0} The patterns to be inserted into pentagons (left) and hexagons (right) for the Goldberg operation $(5,0)$.}
%\end{minipage}
\end{figure}

\begin{figure}[htbp]
\centering
\includegraphics[width=120mm]{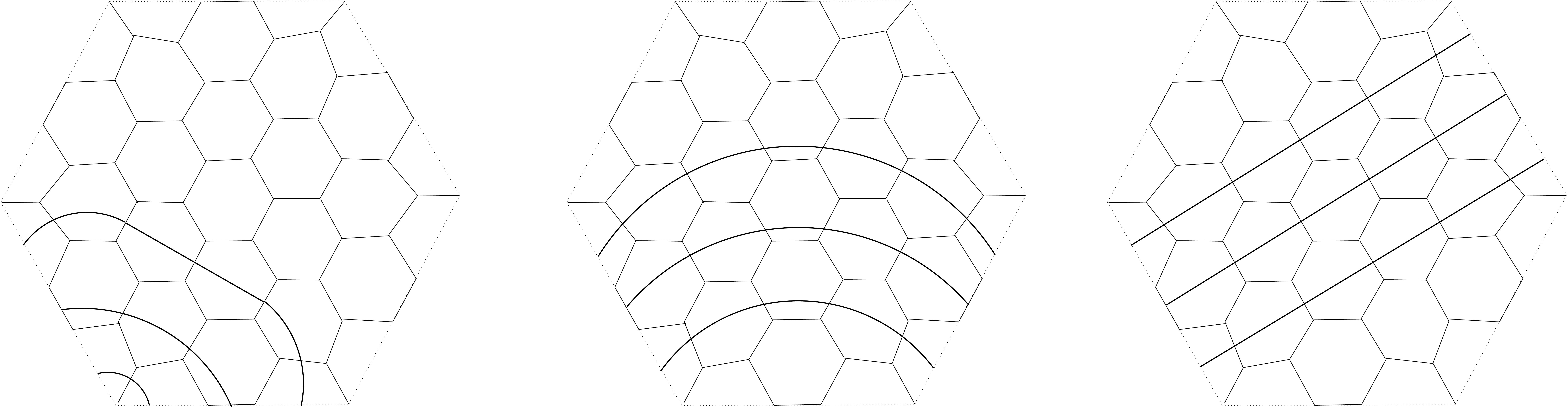}
%\begin{minipage}{\captionwidth}
\caption{\label{fig:traversal} The three ways a hexagon in a hexagon
  cycle can be traversed, showing how the three hexagon cycles emerging
  from the original hexagon can be defined.}
%\end{minipage}
\end{figure}

In Figure~\ref{fig:traversal} it is shown how a hexagon cycle before the application of the Goldberg operation gives rise
to three hexagon cycles afterwards.  New faces that are contained in a face that was inside the old hexagon cycle
lie inside all three new hexagon cycles. The definition of pentagon clusters in a fullerene implies that each pair of clusters is separated
by at least one hexagon cycle, and on applying the Goldberg operation $(5,0)$ $k$ times, the faces lying inside different original
clusters awill become separated by at least  $3^k$ hexagon cycles and therefore have distance at least $3^k$. This remains true
if we apply modifications to the results of the operations,
provided they remain inside the region of the former clusters.

 After applying the Goldberg operation, all pentagons will be isolated, so the original clusters will be destroyed.
Figures~\ref{fig:replace2} to \ref{fig:replace5} show how parts of the inflated fullerene can be replaced
(inside the perimeter of the original cluster) so that the original clusters are reinstated.

\begin{figure}[!bp]
\centering
\includegraphics[width=80mm]{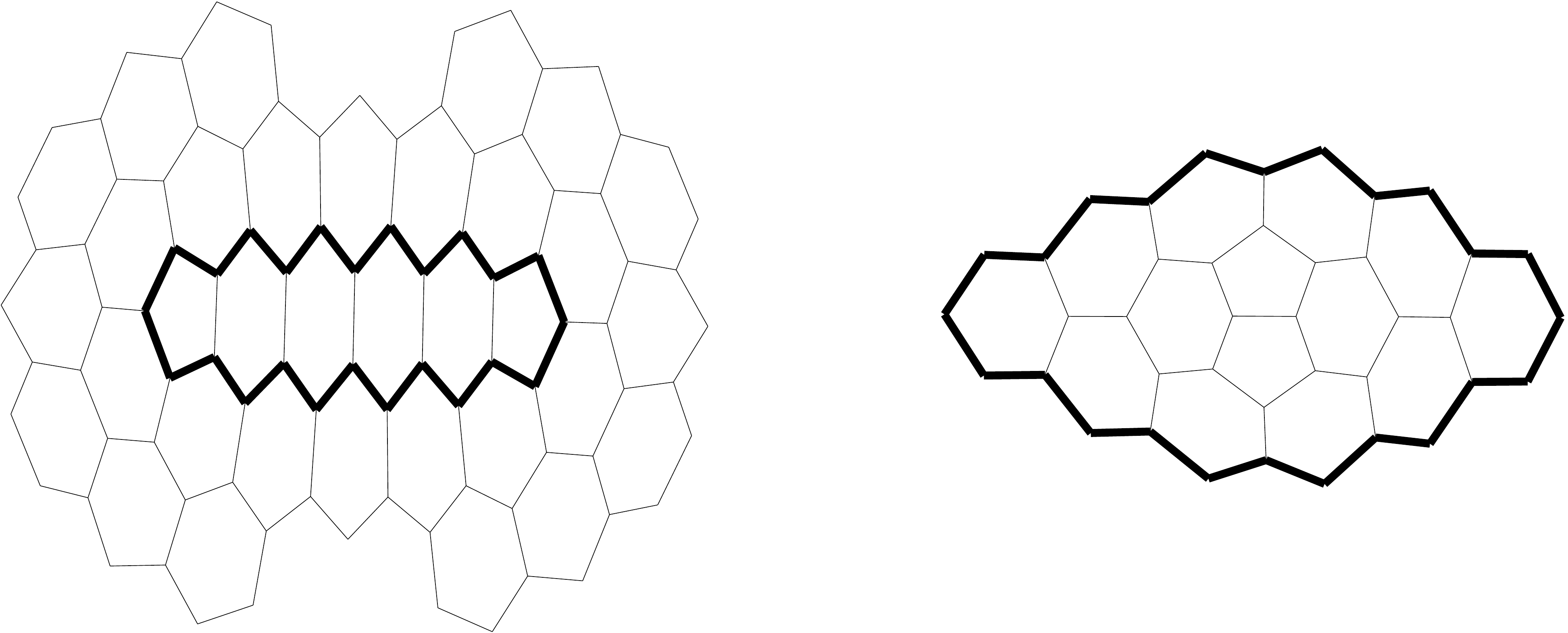}
%\begin{minipage}{\captionwidth}
\caption{\label{fig:replace2} After applying the Goldberg operation $(5,0)$, the inflated cluster with 2 pentagons has isolated pentagons.
It is replaced by the original with some 
additional surrounding hexagons. The faces shown for the inflated cluster all lie inside the original cluster.}
%\end{minipage}
\end{figure}

\begin{figure}[htp]
\begin{center}
\includegraphics[width=100mm]{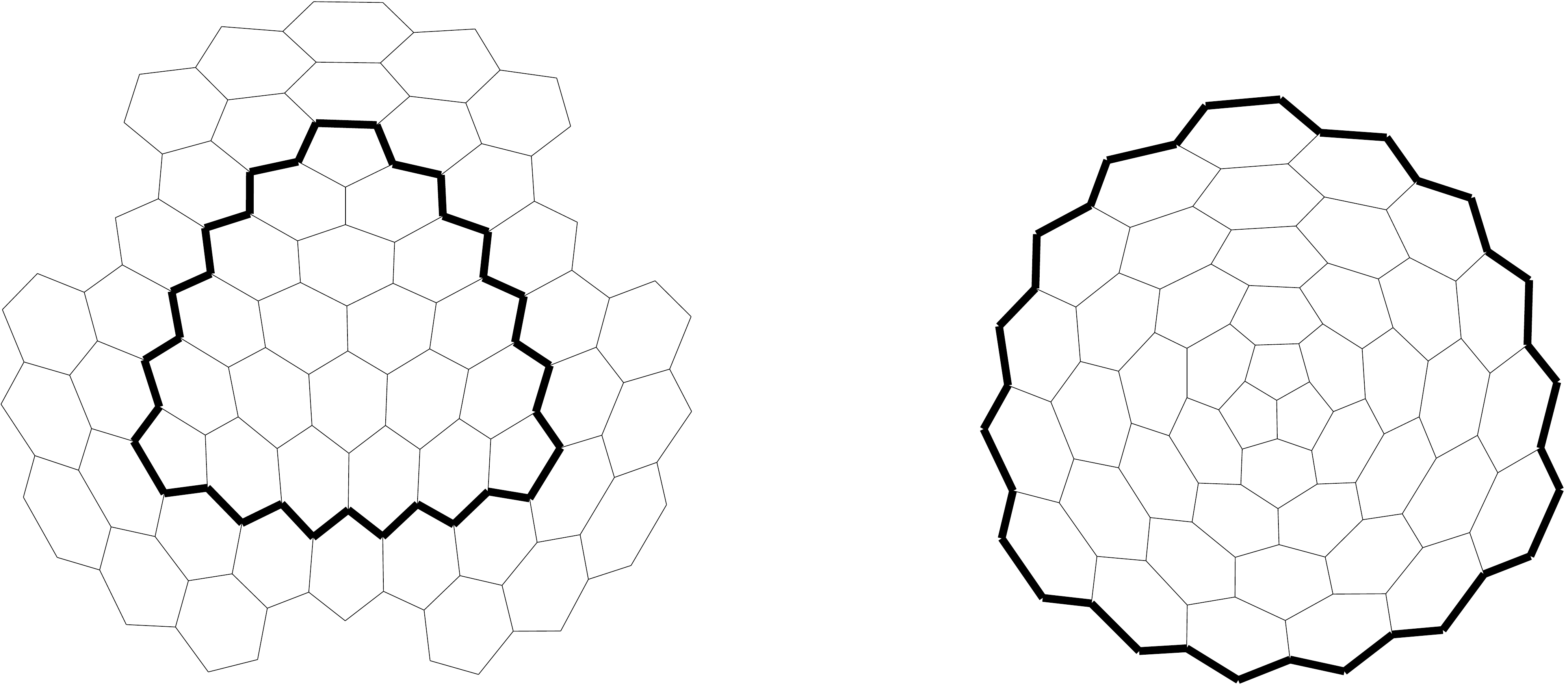}
%\begin{minipage}{\captionwidth}
\caption{\label{fig:replace3b} The inflated cluster with 3 pentagons, and its replacement.}
%\end{minipage}
\end{center}
\end{figure}

\begin{figure}[htp]
\begin{center}
\includegraphics[width=120mm]{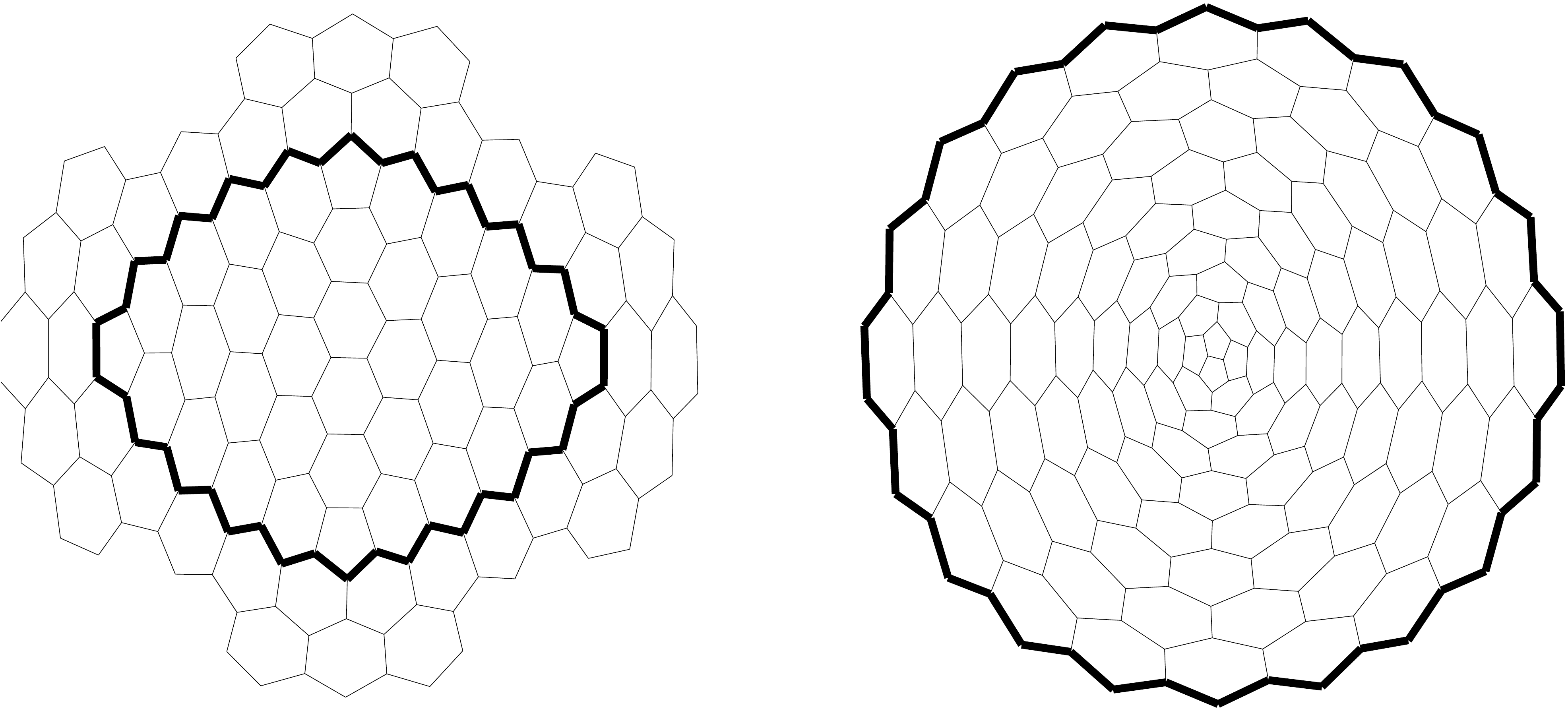}
%\begin{minipage}{\captionwidth}
\caption{\label{fig:replace4b} The inflated cluster with 4 pentagons, and its replacement.}
%\end{minipage}
\end{center}
\end{figure}

\begin{figure}[htp]
\begin{center}
\includegraphics[width=140mm]{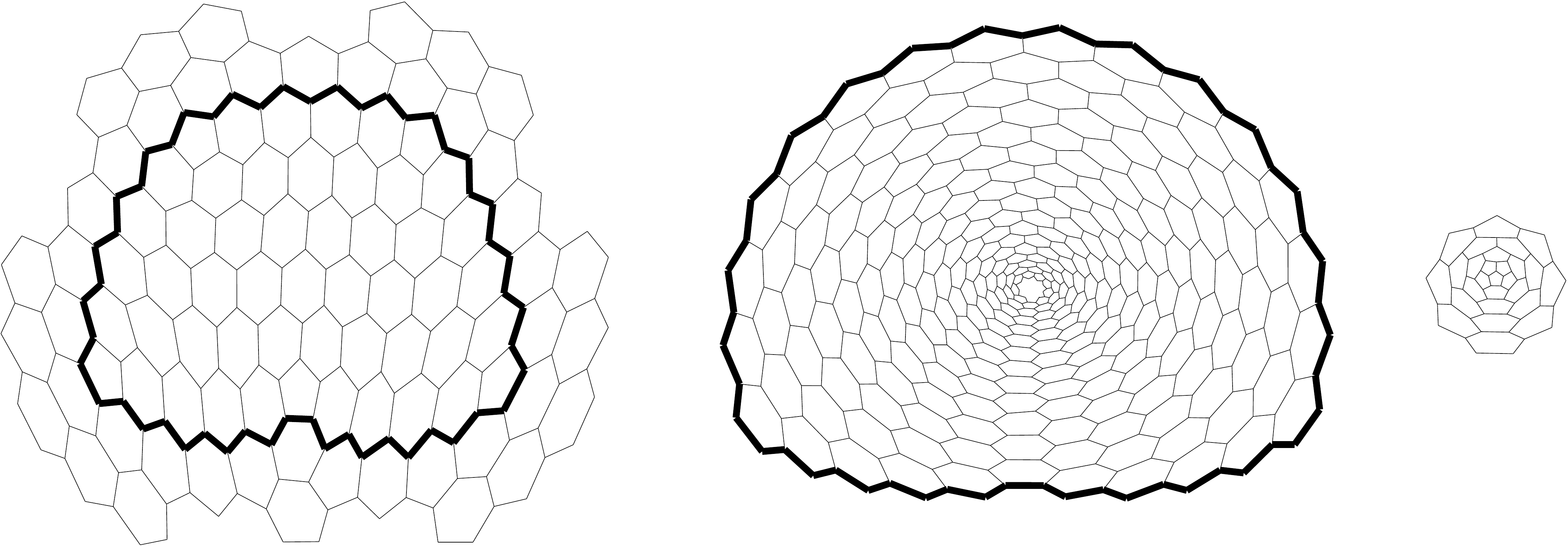}
%\begin{minipage}{\captionwidth}
\caption{\label{fig:replace5} The inflated cluster with 5 pentagons, and its replacement. 
The innermost part of the replacement is drawn
separately for better visibility.}
%\end{minipage}
\end{center}
\end{figure}

In order to complete the proof of Theorem~\ref{thm:atmost5}, we
must finally show that for each combination of our representative
clusters that include altogether 12 pentagons, there exists a fullerene with
these clusters that we can use as a starting point of our inflation
procedure. We did this by computer. We searched the output of the
program {\em fullgen} (see \cite{BrDr95_2}) by two independent
programs for fullerenes with the given clusters. For all combinations
of representative clusters with together $12$ pentagons a fullerene 
with this combination of clusters was found. For the combination
of 7 isolated vertices and the 5-cluster, the largest number of vertices
was needed. The smallest fullerene with this combination has $100$ vertices.

A direct consequence of Theorem~\ref{thm:atmost5} is the following corollary:

\begin{corollary}\label{cor:atmost5}

Each partition $(p_1,p_2,\dots ,p_k)$ with $p_1<6$ is in class (d).

\end{corollary}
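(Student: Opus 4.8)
The plan is to obtain the corollary as an immediate unwinding of Theorem~\ref{thm:atmost5}; essentially no new construction is needed, so I would present it as a short chain of observations rather than a computation.

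First I would record a bookkeeping fact that makes the quantity $s(F)$ legitimate in this setting: if $(p_1,\dots,p_k)$ is a partition of $12$ with every $p_i<6$, then $k\ge 3$, because any two parts of size at most $5$ contribute at most $10<12$. Consequently any fullerene $F$ with $\mathit{PIP}(F)=(p_1,\dots,p_k)$ has at least two (indeed at least three) pentagon clusters, so its separation number $s(F)$ is defined.

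Next I would fix an arbitrary $s>0$ and invoke Theorem~\ref{thm:atmost5} with $P=(p_1,\dots,p_k)$ and this value of $s$: it yields a fullerene $F$ with $\mathit{PIP}(F)=P$ and $s(F)\ge s$. Since $s$ was arbitrary, $P$ is realized by fullerenes of arbitrarily large separation number, which is precisely the defining property of class (d). For a consistency check one may note that this is incompatible with classes (a), (b) and (c): $P$ is realized, and since any fixed fullerene is a finite graph and hence has bounded separation number, an unbounded supply of separation values forces infinitely many non-isomorphic realizing fullerenes and rules out a uniform bound on $s(F)$; thus (d) is the only remaining possibility, as claimed.

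As for where the difficulty lies: there is essentially no obstacle at the level of this corollary. All the substance is contained in Theorem~\ref{thm:atmost5}, whose proof supplies the Goldberg $(5,0)$ inflation, the cluster replacements of Figures~\ref{fig:replace2}--\ref{fig:replace5}, and the finite computer search for suitable seed fullerenes. The only care required here is the counting remark ($k\ge 3$) guaranteeing that $s(F)$ is meaningful, and the elementary observation that ``arbitrarily large $s(F)$'' already excludes classes (a), (b) and (c).
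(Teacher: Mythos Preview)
Your proposal is correct and matches the paper's approach: the paper simply states the corollary as a ``direct consequence'' of Theorem~\ref{thm:atmost5} without further argument, and your write-up just unpacks that implication. Your added observation that $k\ge 3$ (so that $s(F)$ is well-defined) is a helpful piece of bookkeeping the paper leaves implicit, but it does not constitute a different route.
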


\section{Conclusion}

For each possible partition of the 
number $12$ we have decided whether this partition can describe the
sizes of pentagon clusters in a fullerene and whether the different clusters
can be at an arbitrarily large distance from each other.

Of the $30$ possible partitions with largest cluster of size 6 or
more, $15$ are impossible in a fullerene, $9$ occur in only a finite
number of fullerene isomers, $5$ occur in an infinite number of
fullerenes but with bounded separation number, and only $1$ (partition
into two sets of 6) occurs in an infinite number of fullerenes and
with unbounded separation as defined by the separation number.  All
$47$ partitions with largest cluster of size $5$ or less can occur in
an infinite number of fullerenes and have unbounded separation number.

Here we have focused on the sizes of clusters rather than their exact structure,
as the number of combinations of non-isomorphic clusters is simply too large. The fact
that a certain combination of cluster sizes can occur does not imply that all clusters
with these sizes can occur together in a fullerene. Investigating the most important
and interesting combinations of cluster sizes in detail might be the topic of further research.

Perhaps the main chemical significance of the results is that they
rule out so many apparently possible types of pentagon distribution
for fullerenes: fullerenes with one connected set of pentagons of
large size, and various isolated pentagon pairs and singletons, are
simply impossible in many cases: no fullerene can contain a cluster of
$p > 6$ pentagons and three isolated pentagons, for example 
(see Table~\ref{tab:summary}).  The fullerenes that are in some sense furthest
from the IPR class, with one or more large ($p > 6$) connected
components of pentagons, have been characterised: there are $41$ with
a $12$-cluster, $2$ with an $11$ cluster, and so on. Table~\ref{tab:spiralnumbers}
lists these isomers.  Some have the minimum number of pentagon
adjacencies for their vertex count, but many others are, of course,
energetically unlikely as bare neutral cages. This feature makes
them attractive as a test set for investigation of the factors that
determine fullerene stability, and for testing how far the stability
rules can be bent by changing electron count, for example.

\providecommand{\bysame}{\leavevmode\hbox to3em{\hrulefill}\thinspace}
\providecommand{\MR}{\relax\ifhmode\unskip\space\fi MR }
% \MRhref is called by the amsart/book/proc definition of \MR.
\providecommand{\MRhref}[2]{%
  \href{http://www.ams.org/mathscinet-getitem?mr=#1}{#2}
}
\providecommand{\href}[2]{#2}

\end{document}